\theoremstyle{plain}
\newtheorem{theorem}{Theorem}[section]
\newtheorem{proposition}[theorem]{Proposition}
\newtheorem{lemma}[theorem]{Lemma}
\theoremstyle{definition}
\newtheorem{definition}[theorem]{Definition}
\theoremstyle{remark}
\newtheorem{remark}[theorem]{Remark}
\numberwithin{equation}{section}
\DeclareMathOperator{\dist}{dist}
\newcommand{\C}{\mathbb{C}}
\newcommand{\Lap}{\mathcal{L}}
\newcommand{\Dom}{D}
\newcommand{\Res}{\rho}
\newcommand{\Sig}{\Sigma}
\newcommand{\Gtheta}{\Gamma_\theta}
\newcommand{\Kgen}{K}
\newcommand{\Vgen}{V}
\newcommand{\Kab}{K_{\alpha}^{\mathrm{ABC}}}
\newcommand{\Phiab}{\Phi_{\alpha}}
\newcommand{\WD}{\,^{W}\!D}
\newcommand{\Kw}{K_{\alpha,\beta}^{W}}
\newcommand{\Psym}{\Psi_{\alpha,\beta}}
\newcommand{\Bnorm}{B_{\alpha,\beta}}
\title{\textbf{Rational-Kernel Fractional Evolution Equations with Almost Sectorial Operators: A Resolvent Framework Unifying ABC and W Dynamics}}
\author{Mohamed Wakrim\\
\small Ibn Zohr University, Faculty of Sciences, Agadir, Morocco\\
\small \texttt{m.wakrim@uiz.ac.ma}}
\date{\today}
\newcommand{\keywords}[1]{%
  \par\smallskip
  \noindent\textbf{Keywords.} #1\par
}
\newcommand{\msc}[1]{%
  \par\smallskip
  \noindent\textbf{MSC 2020.} #1\par
}
\begin{document}
\maketitle

\begin{abstract}
Fractional evolution equations are widely used to model diffusion processes with memory effects and anomalous time scaling.
Most existing theories rely on Caputo-type derivatives combined with sectorial operators. In many applied diffusion models, including Kimura and Bessel-type operators, the natural generator is only almost sectorial: its resolvent is well controlled at high frequencies but may fail to remain bounded near the origin. This prevents a direct use of standard Caputo-based frameworks.

\noindent In this work, we develop a resolvent approach for fractional evolution equations with rational, nonsingular kernels, including the Atangana--Baleanu (ABC) derivative and a two-parameter W-derivative. The analysis is based on the Laplace-domain mapping $z = s^{\alpha-1}$, which shifts the inversion contour into the high-frequency resolvent regime of the generator; see Lemma~\ref{lem:geometry}. Within this framework, we prove well-posedness, strong continuity of the associated resolvent families, and fractional smoothing estimates of order $t^{-\alpha\gamma}$.
\end{abstract}

\keywords{fractional evolution equations; rational kernels; Atangana--Baleanu (ABC) derivative;
W derivative; almost sectorial operators; resolvent families; degenerate diffusion}

\msc{47D06; 34A08; 35K65; 35R11; 35B65; 47A10}

\section{Introduction}\label{sec:introduction}

Fractional evolution equations have become a central tool for modeling diffusion processes with memory, hereditary effects, and anomalous time scaling. In the classical setting, fractional time derivatives---most notably the Caputo derivative---are combined with sectorial operators generating analytic semigroups. This approach yields a well-developed theory encompassing existence, uniqueness, regularity, and long-time behavior of solutions; see, for instance, \cite{Podlubny1999,Pruss1993,Haase2006}.
For resolvent-based perspectives on fractional evolution equations beyond the classical analytic semigroup framework, we also refer to
\cite{Bazhilov2020,LiZheng2015}.

In Section~\ref{sec:caputo-failure}, we show rigorously that the classical Caputo fractional dynamics is structurally incompatible with almost sectorial operators. The obstruction is not technical but geometric: the Laplace symbol $s^\alpha$ forces resolvent evaluation near the spectral origin, where almost sectoriality
provides no control.

\medskip

In a broad range of applied models, the natural spatial generator fails to be sectorial. Typical examples include degenerate or singular elliptic operators, where ellipticity deteriorates near parts of the boundary or at singular points of the
domain. Kimura-type operators in population genetics and Bessel-type operators associated with radial diffusions are representative cases. In appropriate weighted $L^2$ spaces, these operators are often self-adjoint and nonpositive, yet their resolvent satisfies estimates only for large spectral parameters.
Near the origin, the resolvent may blow up or fail to be uniformly controlled.
Such operators belong to the class of \emph{almost sectorial} operators.

\medskip
\noindent\textbf{A structural obstruction (Caputo).}
Consider the Caputo fractional evolution equation
\begin{equation}\label{eq:intro-caputo}
{}^{\mathrm C}D_t^\alpha u(t) + Au(t) = f(t),
\qquad 0<\alpha<1,
\end{equation}
where $A$ is almost sectorial on a Banach space $X$.
In the Laplace domain, \eqref{eq:intro-caputo} leads to resolvent expressions of the form $(s^\alpha I + A)^{-1}$. During Laplace inversion, the contour necessarily approaches the origin $s=0$ and, since $s^\alpha \to 0$ as $s \to 0$, the resolvent is forced to probe the spectral region near $z=0$. This is precisely where almost sectoriality provides no estimate.
The issue is intrinsic: the Caputo symbol $s^\alpha$ drives the analysis into the low-frequency regime. Section~\ref{sec:caputo-failure} makes this obstruction precise and explains why Caputo dynamics generally fails for almost sectorial generators.

\medskip
\noindent\textbf{Redirection via rational nonsingular kernels (ABC, W).}
A different situation arises for fractional derivatives with rational, nonsingular kernels, such as the Atangana--Baleanu derivative (ABC) and more general two-parameter W derivatives.
Although ABC and W originate from different constructions, they share a key Laplace-domain feature: the resolvent is evaluated at an argument involving $s^{\alpha-1}$ rather than $s^\alpha$. Since $\alpha-1<0$, low Laplace frequencies $|s| \to 0$ are mapped to large spectral
parameters $|s^{\alpha-1}| \to \infty$. This mechanism replaces the problematic evaluation of $(zI-A)^{-1}$ near $z=0$ by resolvent estimates at large spectral parameters, precisely where almost sectorial operators admit uniform control.

\medskip
\noindent\textbf{Aim and contributions.}
Our aim is to build a unified resolvent framework that makes the above redirection mechanism transparent. The starting point is an admissible Laplace multiplier $\Kgen(s)$ and the associated contour-integral resolvent family. We first prove that this family is well defined: the contour integral converges absolutely in operator norm and yields bounded operators on $X$. We then show that the family depends continuously on time and satisfies a variation-of-constants formula, which provides the natural notion of mild solution. A key outcome is a fractional smoothing estimate: for every $\gamma\in[0,1)$, the action of $A^\gamma$ along the flow obeys the decay scale $t^{-\alpha\gamma}$. After establishing the abstract results, we verify that the ABC and W kernels fit into this admissible class, so that both dynamics are covered by the same mechanism. Finally, we illustrate the scope of the theory on two representative degenerate generators, namely Kimura-type and Bessel-type diffusion operators, and we also prove that in this setting
the mild and weak formulations are equivalent.

\medskip
\noindent\textbf{Geometric mechanism.}
For clarity, Figure~\ref{fig:geometry} sketches the contour mapping in the complex plane: the left-sectorial Laplace contour $\Gtheta$ is mapped by $s \mapsto z = s^{\alpha-1}$ into a region avoiding the forbidden sector of $A$, while small values of $|s|$ correspond to large values of $|z|$.

\begin{figure}[H]
\centering
\begin{subfigure}[b]{0.48\textwidth}
\centering
\begin{tikzpicture}[scale=1.0]
\draw[->] (-2.3,0) -- (2.3,0) node[right] {$\Re s$};
\draw[->] (0,-2.3) -- (0,2.3) node[above] {$\Im s$};
\draw[thick] (0,0) -- (-1.9,1.3);
\draw[thick] (0,0) -- (-1.9,-1.3);
\node at (-1.55,1.45) {$\Gtheta$};
\node at (-0.55,1.05) {$\theta$};
\end{tikzpicture}
\caption{Left-sectorial contour $\Gtheta$ in the $s$-plane.}
\end{subfigure}\hfill
\begin{subfigure}[b]{0.48\textwidth}
\centering
\begin{tikzpicture}[scale=1.0]
\draw[->] (-2.3,0) -- (2.3,0) node[right] {$\Re z$};
\draw[->] (0,-2.3) -- (0,2.3) node[above] {$\Im z$};
\draw[thick, dashed] (0,0) -- (1.9,1.1);
\draw[thick, dashed] (0,0) -- (1.9,-1.1);
\node at (1.35,1.35) {$\Sig_{\theta_A}$};
\node at (-1.15,1.45) {$z=s^{\alpha-1}$};
\end{tikzpicture}
\caption{Image avoids $\Sig_{\theta_A}$ and probes large $|z|$ when $|s|\to0$.}
\end{subfigure}
\caption{Geometric redirection induced by the mapping $z=s^{\alpha-1}$.}
\label{fig:geometry}
\end{figure}
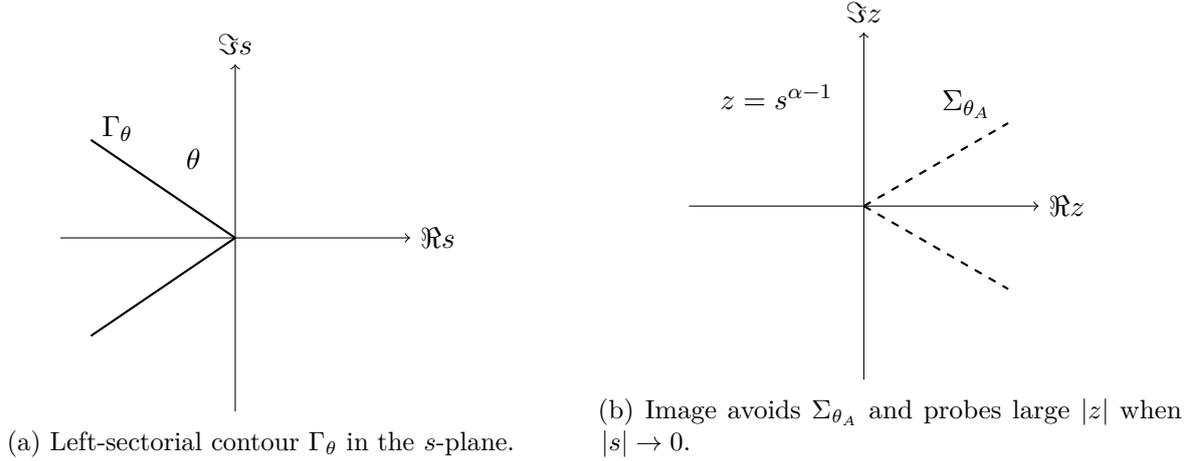

\noindent The two-parameter W-operator considered in this work constitutes a new fractional derivative. A detailed study of its kernel properties will appear in a forthcoming work \cite{WoperatorRef}. To the best of our knowledge, it has not appeared previously in the literature. Its defining feature is that the associated Laplace multiplier preserves the same geometric redirection $z=s^{\alpha-1}$ as the Atangana--Baleanu kernel, while introducing an additional parameter $\beta$ that modulates memory concentration without altering the resolvent geometry nor the smoothing scale.

\section{Preliminaries}\label{sec:preliminaries}

Throughout this work, $X$ denotes a complex Banach space and
$A:\Dom(A)\subset X\to X$ a closed, densely defined linear operator.
We recall the notion of almost sectorial operators and the high-frequency tools
required later.

\subsection{Almost sectorial operators}\label{subsec:asectorial}

\begin{definition}[Almost sectorial operator]\label{def:almost-sectorial}
The operator $A$ is called \emph{almost sectorial} if there exist
$\theta_A\in(0,\pi)$, $M>0$, and $R>0$ such that
\[
\C\setminus \Sig_{\theta_A} \subset \Res(A),\qquad
\Sig_{\theta_A}:=\{z\in\C\setminus\{0\}:|\arg z|<\theta_A\},
\]
and
\begin{equation}\label{eq:asectorial-res}
\|(zI-A)^{-1}\|\le M|z|^{-1},\qquad z\notin\Sig_{\theta_A},\ |z|\ge R.
\end{equation}
\end{definition}

\noindent Almost sectoriality imposes \emph{no uniform resolvent bound} as $|z|\to0$. Almost sectorial operator classes and their functional calculus at high frequencies are discussed in connection with maximal regularity and $H^\infty$-calculus techniques; see, e.g., \cite{KunstmannWeis2004,Haase2006}. In particular, $0$ may belong to $\sigma(A)$ and the resolvent may blow up or even fail to exist near the origin.

\begin{remark}[Interpretation and scope]
Almost sectorial operators naturally arise for degenerate or singular diffusion generators. In weighted spaces they may be self-adjoint and dissipative, with robust high-frequency resolvent structure but poor control near $z=0$.
\end{remark}

\subsection{Fractional powers and a high-frequency bound}\label{subsec:frac-powers}

For $\gamma\in[0,1)$, fractional powers $A^\gamma$ can be defined through a functional calculus restricted to contours avoiding the origin (the precise construction depends on the operator class). We assume the availability of the standard high-frequency estimate. Such bounds are standard for sectorial operators and extend to several almost
sectorial settings when the functional calculus is restricted away from the origin; see \cite{Haase2006,KunstmannWeis2004}.

\begin{equation}\label{eq:fracpower-res}
\|A^\gamma (zI-A)^{-1}\|\le C_\gamma |z|^{\gamma-1},\qquad
z\notin\Sig_{\theta_A},\ |z|\ge R,
\end{equation}
for some $C_\gamma>0$ and all $\gamma\in[0,1)$.

\begin{remark}
Estimate \eqref{eq:fracpower-res} is classical for sectorial operators and remains valid for almost sectorial operators under the sole assumption of high-frequency resolvent control, i.e.,
$z \in \mathbb{C}\setminus\Sigma_{\theta}$ with $|z|$ sufficiently large. No resolvent bounds near the origin are required in the present framework.
\end{remark}

\section{Failure of Caputo fractional dynamics}\label{sec:caputo-failure}

We recall why Caputo fractional equations are, in general, incompatible with almost sectorial generators without additional low-frequency assumptions.

\noindent Consider the homogeneous Caputo problem
\begin{equation}\label{eq:caputo-eq}
{}^{\mathrm{C}}D_t^\alpha u(t) + Au(t) = 0,
\qquad 0<\alpha<1,\quad u(0)=u_0.
\end{equation}
Taking Laplace transforms yields
\begin{equation}\label{eq:caputo-resolvent}
\widehat{u}(s)
=
(s^\alpha I + A)^{-1}\, s^{\alpha-1}u_0,
\qquad \Re s>0.
\end{equation}

\begin{proposition}[Structural failure of Caputo dynamics]\label{prop:caputo-failure}
Let $A$ be almost sectorial. Assume that either $0\in\sigma(A)$ or that $\|(zI-A)^{-1}\|$ is not uniformly bounded as $z\to0$. Then, in general, the Laplace inversion of \eqref{eq:caputo-resolvent} cannot be justified using only the high-frequency resolvent estimate \eqref{eq:asectorial-res}.
\end{proposition}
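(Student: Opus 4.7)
The plan is to track, along the natural sectorial inversion contour for \eqref{eq:caputo-resolvent}, the spectral argument at which the resolvent is evaluated, and to show that the low-frequency portion of the Laplace contour is inevitably mapped by $s\mapsto -s^\alpha$ into the ball $|z|<R$ on which the standing bound \eqref{eq:asectorial-res} provides no information. The whole argument is geometric: it will rest on the fact that $\alpha\in(0,1)$ makes $s\mapsto s^\alpha$ a contraction toward the origin, which is the opposite of what is needed.

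First I would write the candidate inversion along a left-sectorial contour $\Gtheta=\{re^{\pm i\theta}:r>0\}$ with $\theta\in(\pi/2,\pi)$,
\[
u(t)=\frac{1}{2\pi i}\int_{\Gtheta} e^{st}\,(s^\alpha I+A)^{-1}\,s^{\alpha-1}u_0\,ds,
\]
and identify $(s^\alpha I+A)^{-1}=-((-s^\alpha)I-A)^{-1}$, so that the resolvent is evaluated at the spectral point $z=-s^\alpha$. Parametrizing $s=re^{\pm i\theta}$ yields $|z|=r^\alpha$; since $\alpha\in(0,1)$, the regime $r\to 0$ drives $|z|\to 0$, placing the spectral argument inside the forbidden disc $\{|z|<R\}$ of Definition~\ref{def:almost-sectorial}.

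Next I would couple this geometric fact with the two hypotheses. If $0\in\sigma(A)$, the resolvent is not even defined at $z=0$, so the integrand carries a genuine singularity at $s=0$ that no a priori bound derived from \eqref{eq:asectorial-res} can cancel. If instead $\|(zI-A)^{-1}\|$ is unbounded as $z\to0$, then selecting a sequence $s_n\to0$ along $\Gtheta$ produces $\|((-s_n^\alpha)I-A)^{-1}\|\to\infty$; moreover the scalar prefactor $s^{\alpha-1}$, whose modulus $r^{\alpha-1}$ already diverges as $r\to 0$ because $\alpha-1<0$, amplifies the difficulty. Absolute convergence of the inversion integral near $s=0$ therefore cannot be established from \eqref{eq:asectorial-res} alone, which is the content of the proposition.

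Finally I would close the standard escape routes. Deforming $\Gtheta$ around the origin by a small arc $|s|=\varepsilon$ replaces the low-frequency integral by boundary contributions involving $(zI-A)^{-1}$ in a full neighborhood of $z=0$, i.e.\ precisely the data that is missing. Shifting to a Bromwich contour $\Re s=\sigma>0$ leads to the same conclusion after deformation, since any sectorial deformation in the right half-plane traverses a neighborhood of $s=0$. The hard part will be to phrase the obstruction in a way that is independent of a particular contour choice, by identifying it with the pre-image under $s\mapsto -s^\alpha$ of the forbidden spectral region $\{|z|<R\}$: this pre-image contains a punctured neighborhood of $s=0$ within any admissible inversion contour, which makes the failure structural rather than technical and, by contrast, motivates the redirection mechanism $z=s^{\alpha-1}$ pursued in the remainder of the paper.
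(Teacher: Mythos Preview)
Your proof is correct and follows essentially the same route as the paper's: both track the spectral argument along the inversion contour, observe that $s\mapsto s^\alpha$ (you write $-s^\alpha$, which is the more careful identification) sends the low-frequency portion into the ball $|z|<R$ where \eqref{eq:asectorial-res} is silent, and then handle the two hypothesized cases separately. Your extra paragraph closing the contour-deformation escape routes is a helpful elaboration the paper does not include, but the core mechanism is identical.
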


\begin{proof}
Formally, Laplace inversion yields
\[
u(t)=\frac{1}{2\pi i}\int_{\Gamma} e^{st}\,(s^\alpha I+A)^{-1}s^{\alpha-1}u_0\,ds,
\]
where $\Gamma$ is a Bromwich-type contour and therefore approaches $s=0$. Since $0<\alpha<1$, we have $s^\alpha\to0$ as $s\to0$ along $\Gamma$, so the resolvent factor $(s^\alpha I+A)^{-1}$ necessarily probes spectral parameters $z=s^\alpha$ arbitrarily close to $0$.

\noindent Almost sectoriality provides the estimate
$\|(zI-A)^{-1}\|\le M|z|^{-1}$ only for $|z|\ge R$, and thus offers no control in a neighborhood
of the origin.
If $0\in\sigma(A)$, the resolvent is singular at $z=0$.
If $0\notin\sigma(A)$ but $\|(zI-A)^{-1}\|$ is unbounded as $z\to0$, then for $s$ near $0$
the integrand behaves like
\[
\|(s^\alpha I+A)^{-1}s^{\alpha-1}\| \to \infty,
\]
and no integrable majorant can be constructed in a neighborhood of $s=0$.

\noindent In either case, the absolute convergence required to justify Laplace inversion cannot be obtained from the high-frequency estimate \eqref{eq:asectorial-res} alone.
\end{proof}

\begin{remark}
The obstruction is geometric: Caputo dynamics forces the resolvent to be evaluated
near the origin in the spectral variable $z=s^\alpha$.
\end{remark}

\section{An abstract kernel resolvent framework}\label{sec:abstract-K}

We now isolate the mechanism allowing well-posedness under \emph{high-frequency}
resolvent control only. The key is to replace the Caputo symbol $s^\alpha$ by a kernel
multiplier whose inversion evaluates the resolvent at $z=s^{\alpha-1}$.

\subsection{Laplace inversion contour}\label{subsec:contour}

Let $A$ be almost sectorial with angle $\theta_A\in(0,\pi)$.
Fix $\theta\in(\pi/2,\theta_A)$ and define the left-sectorial contour
\[
\Gtheta=\Gtheta^+\cup \Gtheta^-,
\qquad
\Gtheta^\pm = \{re^{\pm i\theta}: r\in(0,\infty)\},
\]
oriented from $+\infty e^{i\theta}$ to $+\infty e^{-i\theta}$.
Complex powers are taken with the principal branch on $\C\setminus(-\infty,0]$.

\subsection{Admissible kernel multipliers}\label{subsec:admissible}

\begin{definition}[Admissible kernel multiplier]\label{def:admissible-K}
A function $\Kgen:\C\setminus(-\infty,0]\to\C$ is called \emph{admissible} if it is analytic and
there exist constants $C_0,C_\infty>0$ such that, for all $s\in\Gtheta$,
\begin{equation}\label{eq:K-bounds-abstract}
|\Kgen(s)|\le
\begin{cases}
C_0, & 0<|s|\le 1,\\[1mm]
C_\infty\,|s|^{\alpha-1}, & |s|\ge 1.
\end{cases}
\end{equation}
\end{definition}

\noindent The growth condition $|K(s)| \le C_\infty |s|^{\alpha-1}$ for $|s|\ge 1$ is not claimed to be optimal. It is, however, sufficient to ensure absolute convergence of the
Laplace inversion integral and compatibility with almost sectorial
resolvent bounds. Sharper conditions could be considered, but would not enlarge the class of generators covered by the present theory.

\begin{remark}
Condition \eqref{eq:K-bounds-abstract} is minimal: it ensures integrability near $s=0$ and, combined with $e^{st}$ on $\Gtheta$, integrability at infinity.
\end{remark}

\subsection{Geometric redirection induced by $z=s^{\alpha-1}$}\label{subsec:geometry}

\begin{lemma}[Geometric redirection]\label{lem:geometry}
Let $0<\alpha<1$ and let $s\in\Gamma_\theta$, where
\[
\Gamma_\theta^\pm=\{ r e^{\pm i\theta} : r>0\},\qquad \theta\in\bigl(\tfrac{\pi}{2},\pi\bigr),
\]
and complex powers are taken with the principal branch on $\C\setminus(-\infty,0]$.
Set $z=s^{\alpha-1}$.
Then:
\begin{enumerate}[label=(\roman*),leftmargin=7mm]
\item $|z|=|s|^{\alpha-1}\to\infty$ as $|s|\to0$ along $\Gamma_\theta$,
\item $\arg z = (\alpha-1)\arg s$,
\item if $\; (1-\alpha)\theta \ge \theta_A$, then $z\notin \Sigma_{\theta_A}$ (hence $z\in\rho(A)$).
\end{enumerate}
\end{lemma}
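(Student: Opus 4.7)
The whole statement follows from a direct unpacking of the principal-branch definition $s^{\alpha-1} = \exp\bigl((\alpha-1)\log s\bigr)$, where $\log s = \ln|s| + i\arg s$ with $\arg s \in (-\pi,\pi]$. Parts (i) and (ii) are then simply the modulus and argument of this exponential, while (iii) combines (ii) with the inclusion $\C\setminus\Sigma_{\theta_A}\subset\rho(A)$ supplied by Definition~\ref{def:almost-sectorial}.

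For (i), a point $s\in\Gamma_\theta$ satisfies $|s|>0$ and $\arg s = \pm\theta$, so $|z| = |s|^{\alpha-1}$; since $\alpha-1<0$, letting $|s|\to 0^+$ forces $|z|\to\infty$. This is immediate from the formula once it is written down.

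For (ii), by construction $z = |s|^{\alpha-1}\exp\bigl(i(\alpha-1)\arg s\bigr)$, so $\arg z$ equals $(\alpha-1)\arg s$ up to an integer multiple of $2\pi$. The only substantive point is to verify that the raw value $(\alpha-1)\arg s$ already lies in $(-\pi,\pi]$, so that no modular correction is applied. Since $|\arg s|=\theta<\pi$ and $|\alpha-1|<1$, one has
\[
|(\alpha-1)\arg s| = (1-\alpha)\theta < \theta < \pi,
\]
which settles the branch check and yields (ii) with equality.

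For (iii), (ii) gives $|\arg z| = (1-\alpha)\theta$, so the hypothesis $(1-\alpha)\theta \ge \theta_A$ translates directly into $|\arg z|\ge\theta_A$, i.e., $z\notin\Sigma_{\theta_A}$. Almost sectoriality, in the form of the inclusion $\C\setminus\Sigma_{\theta_A}\subset\rho(A)$ from Definition~\ref{def:almost-sectorial}, then gives $z\in\rho(A)$. The only even mildly delicate step in the whole argument is the branch-consistency check in (ii): without the assumption $\theta<\pi$ one could only assert the identity modulo $2\pi$, which would not control $|\arg z|$ in (iii). Once this is noted, the rest is a one-line computation.
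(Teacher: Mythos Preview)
Your proof is correct and follows essentially the same route as the paper's own proof: write $s=re^{\pm i\theta}$, read off modulus and argument of $s^{\alpha-1}$ from the principal-branch definition, and translate the angle condition into $z\notin\Sigma_{\theta_A}$. Your version is in fact slightly more careful than the paper's, since you explicitly verify the branch-consistency check $|(\alpha-1)\arg s|=(1-\alpha)\theta<\pi$ needed to conclude $\arg z=(\alpha-1)\arg s$ exactly rather than only modulo $2\pi$; the paper asserts this identity without comment.
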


\begin{proof}
Write $s=re^{i\varphi}$ with $r=|s|>0$ and $\varphi=\arg(s)\in(-\pi,\pi)$.
Along $\Gamma_\theta$ we have $\varphi=+\theta$ on $\Gamma_\theta^+$ and $\varphi=-\theta$ on
$\Gamma_\theta^-$.

\smallskip
\noindent\emph{(i)} Since $0<\alpha<1$, we have $\alpha-1<0$. Therefore
\[
|z| = |s^{\alpha-1}| = |s|^{\alpha-1} = r^{\alpha-1}.
\]
When $r\to0^+$, the exponent $\alpha-1<0$ forces $r^{\alpha-1}\to+\infty$. This proves (i).

\smallskip
\noindent\emph{(ii)} With the principal branch, complex powers satisfy
\[
s^{\alpha-1} = r^{\alpha-1} e^{i(\alpha-1)\varphi}.
\]
Hence $\arg(z)=\arg(s^{\alpha-1})=(\alpha-1)\varphi=(\alpha-1)\arg(s)$, which is (ii).

\smallskip
\noindent\emph{(iii)} On $\Gamma_\theta^\pm$ we have $\arg(s)=\pm\theta$, so by (ii)
\[
\arg(z)= (\alpha-1)(\pm\theta)=\pm(\alpha-1)\theta.
\]
Taking absolute values and using $\alpha-1<0$ gives
\[
|\arg(z)| = |(\alpha-1)\theta| = (1-\alpha)\theta.
\]
Recall that
\[
\Sigma_{\theta_A}=\{z\in\C\setminus\{0\}:\ |\arg z|<\theta_A\}.
\]
Thus $z\notin\Sigma_{\theta_A}$ is equivalent to $|\arg(z)|\ge\theta_A$.
Under the stated angle condition $(1-\alpha)\theta\ge\theta_A$, we indeed have
\[
|\arg(z)|=(1-\alpha)\theta \ge \theta_A,
\]
and therefore $z\in \C\setminus\Sigma_{\theta_A}\subset\rho(A)$.
This proves (iii).
\end{proof}

\subsection{High-frequency resolvent bounds along the contour}\label{subsec:HF-bound}

\begin{lemma}[Resolvent bound along $\Gamma_\theta$]\label{lem:HF-resolvent}
Assume that $A$ is almost sectorial in the sense of Definition~\ref{def:almost-sectorial}, i.e. there exist $\theta_A\in(0,\pi)$, $M>0$ and $R>0$ such that
$\C\setminus\Sigma_{\theta_A}\subset\rho(A)$ and
\[
\|(zI-A)^{-1}\|\le M|z|^{-1}\qquad\text{for all }z\notin\Sigma_{\theta_A},\ |z|\ge R.
\]
Fix $\theta$ and consider $s\in\Gamma_\theta$. Set $z=s^{\alpha-1}$.
Then there exists a constant $C>0$ such that, for every $s\in\Gamma_\theta$,
\begin{equation}\label{eq:resolvent-bound-s}
\|(s^{\alpha-1}I-A)^{-1}\|
\le
\begin{cases}
C\,|s|^{1-\alpha}, & 0<|s|\le 1,\\[1mm]
C, & |s|\ge 1.
\end{cases}
\end{equation}
\end{lemma}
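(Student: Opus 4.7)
The plan is to promote the geometric information of Lemma~\ref{lem:geometry} into a quantitative estimate by inserting the high-frequency resolvent bound \eqref{eq:asectorial-res}, and to split the contour into the two regimes $|s|\le 1$ and $|s|\ge 1$. First, I would invoke Lemma~\ref{lem:geometry}(iii) to conclude that, under the standing angle condition $(1-\alpha)\theta\ge\theta_A$, the image $z := s^{\alpha-1}$ lies in $\mathbb{C}\setminus\Sigma_{\theta_A}\subset\rho(A)$ for every $s\in\Gamma_\theta$. This already settles well-posedness of $(s^{\alpha-1}I-A)^{-1}$ as a bounded operator along the entire contour, so only the piecewise norm estimate remains.

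For $0<|s|\le 1$, the sign $\alpha-1<0$ gives $|z|=|s|^{\alpha-1}\ge 1$. Taking $R\le 1$ without loss of generality (either the original $R$ already satisfies this, or the compact intermediate annulus is absorbed into the constant via continuity of the resolvent on $\rho(A)$), estimate \eqref{eq:asectorial-res} applies directly and yields
\[
\|(s^{\alpha-1}I-A)^{-1}\|\le M\,|s^{\alpha-1}|^{-1} = M\,|s|^{1-\alpha},
\]
which is the first branch of \eqref{eq:resolvent-bound-s} with $C:=M$. This is the step where the geometric redirection $s\mapsto s^{\alpha-1}$ is quantitatively exploited: small $|s|$ is converted into high spectral frequency, precisely where the almost-sectorial bound is available.

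For $|s|\ge 1$, the situation is reversed: $z$ now traces the ray $\{r e^{\pm i(1-\alpha)\theta} : r\in(0,1]\}$, strictly outside $\Sigma_{\theta_A}$ by the angle condition but accumulating at the origin. Here I would use continuity of the resolvent on compact subsets of $\rho(A)$, combined with the fact that the ray remains uniformly separated from $\Sigma_{\theta_A}$, to produce a uniform constant bound $\|(s^{\alpha-1}I-A)^{-1}\|\le C$, giving the second branch of \eqref{eq:resolvent-bound-s}. Concatenating the two regimes and enlarging $C$ if necessary then yields the lemma.

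The main obstacle is this second regime: the definition of almost sectoriality supplies a norm estimate only for $|z|\ge R$, whereas on the large-$|s|$ portion of the contour $|z|$ may approach $0$. Closing this gap rests on the angle condition, which keeps $z$ strictly outside the forbidden sector even as it accumulates at the origin, and on the uniform control of the resolvent along the specific rays $\arg z=\pm(1-\alpha)\theta$ cut out by the contour choice. This is the technically delicate step; once it is absorbed into the constant, the two branches of \eqref{eq:resolvent-bound-s} follow immediately.
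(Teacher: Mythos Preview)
Your proposal is correct and matches the paper's approach: the same two-case split, the same use of \eqref{eq:asectorial-res} for small $|s|$ with the intermediate annulus absorbed by continuity on a compact subset of $\rho(A)$, and the same compactness argument for $|s|\ge1$. The obstacle you flag in the second regime dissolves once you note that Definition~\ref{def:almost-sectorial} forces $0\in\mathbb{C}\setminus\Sigma_{\theta_A}\subset\rho(A)$, so the closure $\{re^{\pm i(1-\alpha)\theta}:r\in(0,1]\}\cup\{0\}$ is a compact subset of $\rho(A)$ on which the resolvent is uniformly bounded---this is exactly the compactness the paper invokes (its assertion that the image set is already closed glosses over the same limit point at $0$).
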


\begin{proof}
Let $s\in\Gamma_\theta$ and set $z=s^{\alpha-1}$. We split the proof into two cases.

\smallskip
\noindent\textbf{Case 1: the region $0<|s|\le 1$.}
Since $0<\alpha<1$, we have $\alpha-1<0$, and therefore
\[
|z| = |s^{\alpha-1}| = |s|^{\alpha-1} = \frac{1}{|s|^{1-\alpha}}.
\]
In particular, when $0<|s|\le 1$, we have $|z|\ge 1$. To use the almost sectorial
estimate, we also need $|z|\ge R$. This is automatic as soon as $|s|$ is small enough:
indeed, $|z|\ge R$ is equivalent to $|s|^{\alpha-1}\ge R$, that is,
\[
|s|\le R^{-1/(1-\alpha)}.
\]
Hence for every $s\in\Gamma_\theta$ with
\[
0<|s|\le \min\{1,\; R^{-1/(1-\alpha)}\},
\]
we have simultaneously $z\notin\Sigma_{\theta_A}$ (by Lemma~\ref{lem:geometry}, with the
appropriate angle condition) and $|z|\ge R$. The resolvent bound
\eqref{eq:asectorial-res} then gives
\[
\|(zI-A)^{-1}\| \le M|z|^{-1} = M\,|s|^{1-\alpha}.
\]

\noindent For the remaining values of $s$ in the annulus
\[
R^{-1/(1-\alpha)} \le |s| \le 1,
\]
the quantity $z=s^{\alpha-1}$ stays in a fixed compact subset of $\rho(A)$:
indeed, $|z|=|s|^{\alpha-1}\in[1,R]$ and $\arg(z)$ is fixed by the contour. Since the resolvent map $z\mapsto (zI-A)^{-1}$ is analytic (hence continuous) on $\rho(A$), it is bounded on that compact set. Therefore there exists $C_1>0$ such that
\[
\|(zI-A)^{-1}\|\le C_1
\qquad\text{whenever }R^{-1/(1-\alpha)} \le |s| \le 1.
\]
Combining the two subcases and increasing constants if necessary, we obtain
\[
\|(s^{\alpha-1}I-A)^{-1}\| \le C\,|s|^{1-\alpha}
\qquad\text{for all }0<|s|\le 1,
\]
because $|s|^{1-\alpha}\ge R^{-1}$ on the annulus, so the uniform bound can be absorbed
into $C|s|^{1-\alpha}$ by enlarging $C$.

\smallskip
\noindent\textbf{Case 2: the region $|s|\ge 1$.}
Consider the image set
\[
\mathcal{K}:=\{\, s^{\alpha-1} : s\in\Gamma_\theta,\ |s|\ge 1 \,\}.
\]
Since the map $s\mapsto s^{\alpha-1}$ is continuous on $\Gamma_\theta$ (with the chosen branch), and $|s|^{\alpha-1}\le 1$ when $|s|\ge 1$, we see that $\mathcal{K}$ is bounded. Moreover, $\mathcal{K}$ is closed (it contains its limit points along the rays), hence $\mathcal{K}$ is compact. By Lemma~\ref{lem:geometry} (again with the appropriate angle condition), $\mathcal{K}\subset\C\setminus\Sigma_{\theta_A}\subset\rho(A)$.
Therefore the resolvent is bounded on $\mathcal{K}$: there exists $C_2>0$ such that
\[
\|(zI-A)^{-1}\|\le C_2 \qquad\text{for all } z\in\mathcal{K}.
\]
Equivalently,
\[
\|(s^{\alpha-1}I-A)^{-1}\|\le C_2 \qquad\text{for all } s\in\Gamma_\theta,\ |s|\ge 1.
\]

\smallskip
\noindent\textbf{Conclusion.}
Taking $C:=\max\{C_1, C_2, M\}$ yields \eqref{eq:resolvent-bound-s}.
\end{proof}

\subsection{The $\Kgen$-resolvent family}\label{subsec:K-resolvent}

\begin{definition}[$\Kgen$-resolvent family]\label{def:K-resolvent}
Let $A$ be almost sectorial and let $\Kgen$ be an admissible kernel multiplier. For $t>0$, define
\begin{equation}\label{eq:K-resolvent}
\Vgen_{\Kgen}(t)
:=
\frac{1}{2\pi i}\int_{\Gtheta} e^{st}\,\Kgen(s)\,(s^{\alpha-1}I-A)^{-1}\,ds.
\end{equation}
\end{definition}

\begin{proposition}[Absolute convergence]\label{prop:absolute-convergence}
For each $t>0$, the integral \eqref{eq:K-resolvent} converges absolutely in operator norm and defines a bounded operator on $X$.
\end{proposition}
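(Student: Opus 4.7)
The plan is to parameterize the rays $\Gamma_\theta^\pm$ by $s = re^{\pm i\theta}$, $r>0$, apply the triangle inequality in operator norm inside the integral, and split the $r$-integration at $r=1$ so as to match the piecewise bounds furnished by Definition~\ref{def:admissible-K} (for $K$) and Lemma~\ref{lem:HF-resolvent} (for the resolvent). A crucial geometric fact is that $\theta\in(\pi/2,\pi)$ forces $\cos\theta<0$, so that $|e^{st}| = e^{rt\cos\theta}$ supplies exponential decay as $r\to\infty$ along either ray.

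For the low-frequency piece $0<r\le 1$, I would combine $|K(s)|\le C_0$ from admissibility with $\|(s^{\alpha-1}I-A)^{-1}\|\le C r^{1-\alpha}$ from Lemma~\ref{lem:HF-resolvent}, together with the trivial bound $|e^{st}|\le 1$. The integrand is then dominated by a constant multiple of $r^{1-\alpha}$; since $1-\alpha\in(0,1)$, this is integrable on $(0,1]$.

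For the high-frequency piece $r\ge 1$, I would combine $|K(s)|\le C_\infty r^{\alpha-1}$ with the uniform resolvent bound $\|(s^{\alpha-1}I-A)^{-1}\|\le C$, yielding a majorant of the form $C' r^{\alpha-1} e^{-rt|\cos\theta|}$, which is integrable on $[1,\infty)$ for every fixed $t>0$ thanks to the exponential factor.

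Summing the contributions of the two rays yields a finite bound on $\int_{\Gamma_\theta} \|e^{st} K(s)(s^{\alpha-1}I-A)^{-1}\|\,|ds|$, which establishes absolute convergence of \eqref{eq:K-resolvent} in operator norm. Since the resulting estimate is uniform in $x\in X$ with $\|x\|\le 1$, the operator $V_K(t)$ is bounded on $X$. I do not expect a real obstacle: the regime change for $K$ and for the resolvent both occur at $|s|=1$, so no interpolation between bounds is required. The only point requiring a moment's care is that the exponent $\alpha-1$ appearing in $K$ at infinity is negative; combined with the uniformly bounded resolvent factor this produces only the mild algebraic rate $r^{\alpha-1}$, which is then comfortably tamed by the exponential decay coming from the left-sectorial contour.
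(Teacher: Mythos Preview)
Your proposal is correct and follows essentially the same route as the paper: parameterize the two rays, split at $r=1$, and combine the admissibility bounds on $K$ with Lemma~\ref{lem:HF-resolvent} for the resolvent. The only cosmetic difference is that on $0<r\le 1$ you invoke the trivial bound $|e^{st}|\le 1$ (valid since $\Re s<0$), whereas the paper keeps the factor $e^{-crt}$; either works because $r^{1-\alpha}$ is already integrable near the origin.
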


\begin{proof}
Fix $t>0$. Along $\Gamma_\theta$ we parametrize the two rays by
$s=r e^{\pm i\theta}$, $r\in(0,\infty)$. Since $\theta\in(\pi/2,\pi)$, we have $\Re(s)=r\cos\theta<0$, hence there exists $c:=-\cos\theta>0$ such that
\[
|e^{st}|=e^{t\Re(s)} \le e^{-crt}.
\]
We estimate the integrand in operator norm and split the contour into the cases $0<r\le 1$ and $r\ge 1$.

\smallskip
\noindent\textbf{(i) The case $0<r\le 1$.}
By admissibility \eqref{eq:K-bounds-abstract}, $|\Kgen(s)|\le C_0$ for $|s|\le 1$, and by Lemma~\ref{lem:HF-resolvent} we have
$\|(s^{\alpha-1}I-A)^{-1}\|\le C\,|s|^{1-\alpha}=C\,r^{1-\alpha}$ for $|s|\le 1$. Thus, for $0<r\le 1$,
\[
\bigl\|e^{st}\Kgen(s)(s^{\alpha-1}I-A)^{-1}\bigr\|
\le C_0\,C\, r^{1-\alpha} e^{-crt}.
\]
Since $|ds|=dr$ on each ray, we obtain
\[
\int_{0}^{1} C_0\,C\, r^{1-\alpha} e^{-crt}\,dr <\infty
\qquad\text{because }1-\alpha>-1.
\]

\smallskip
\noindent\textbf{(ii) The case $r\ge 1$.}
By admissibility \eqref{eq:K-bounds-abstract}, $|\Kgen(s)|\le C_\infty |s|^{\alpha-1}
= C_\infty r^{\alpha-1}$ for $|s|\ge 1$, and by Lemma~\ref{lem:HF-resolvent} there is $C'>0$ such that $\|(s^{\alpha-1}I-A)^{-1}\|\le C'$ for $|s|\ge 1$. Hence, for $r\ge 1$,
\[
\bigl\|e^{st}\Kgen(s)(s^{\alpha-1}I-A)^{-1}\bigr\|
\le C_\infty\,C'\, r^{\alpha-1} e^{-crt}.
\]
Thus
\[
\int_{1}^{\infty} C_\infty\,C'\, r^{\alpha-1} e^{-crt}\,dr <\infty,
\]
since the exponential term dominates any polynomial growth.

\smallskip
\noindent Combining (i) and (ii) on both rays shows that the contour integral in \eqref{eq:K-resolvent} is absolutely convergent in operator norm. Moreover, the same estimates give a finite bound on $\|\Vgen_{\Kgen}(t)\|$, so $\Vgen_{\Kgen}(t)$ defines a bounded operator on $X$.
\end{proof}

\begin{lemma}[Laplace transform]\label{lem:Laplace-VK}
For $x\in X$ and $\Re\lambda>0$,
\[
\Lap\{\Vgen_{\Kgen}(\cdot)x\}(\lambda)
=\int_{0}^{\infty}e^{-\lambda t}\Vgen_{\Kgen}(t)x\,dt
=\Kgen(\lambda)\,(\lambda^{\alpha-1}I-A)^{-1}x.
\]
\end{lemma}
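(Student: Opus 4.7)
The plan is a standard Fubini--Cauchy computation on the defining contour integral.

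\textbf{Integrability and Fubini.} Refining the estimate in the proof of Proposition~\ref{prop:absolute-convergence} via the substitution $u=crt$ in the tail integral yields the sharper bound $\|\Vgen_{\Kgen}(t)\|\lesssim t^{-\alpha}$ as $t\to 0^+$, with exponential decay at infinity. Since $-\alpha>-1$, the iterated integrand
\[
(t,s)\;\longmapsto\; e^{-\Re\lambda\,t}\,e^{t\Re s}\,|\Kgen(s)|\,\|(s^{\alpha-1}I-A)^{-1}\|
\]
is absolutely integrable on $(0,\infty)\times\Gtheta$ for $\Re\lambda>0$: on $\Gtheta$ we have $\Re s<0$, so the time integral produces the factor $1/(\Re\lambda-\Re s)$, against which the remaining spatial integral is finite by the admissibility bound \eqref{eq:K-bounds-abstract} combined with Lemma~\ref{lem:HF-resolvent}. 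This legitimises Fubini.

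\textbf{Elementary $t$-integral and Cauchy extraction.} After interchanging the order of integration and using $\int_0^\infty e^{-(\lambda-s)t}\,dt=1/(\lambda-s)$, valid because $\Re(\lambda-s)>0$ on $\Gtheta$, one is left with
\[
\Lap\{\Vgen_{\Kgen}(\cdot)x\}(\lambda)=\frac{1}{2\pi i}\int_{\Gtheta}\frac{\Kgen(s)(s^{\alpha-1}I-A)^{-1}x}{\lambda-s}\,ds.
\]
I then close $\Gtheta$ with a large circular arc $C_R$ lying in the right sector $\{|\arg s|<\theta\}$ and apply the residue theorem to the resulting closed curve, which encloses the simple pole at $s=\lambda$. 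The residue there is $-\Kgen(\lambda)(\lambda^{\alpha-1}I-A)^{-1}x$; combining with the orientation of $\Gtheta$ specified in Subsection~\ref{subsec:contour} yields exactly the stated identity.

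\textbf{Main obstacle.} The principal subtlety lies in the deformation: one must both control the arc integral and verify analyticity of the integrand inside the closed region. On $C_R$, admissibility gives $|\Kgen(s)|\lesssim R^{\alpha-1}$; the resolvent factor remains uniformly bounded because Lemma~\ref{lem:HF-resolvent} extends to angles slightly below $\theta$, which is available once $\theta$ is chosen so that the angle condition $(1-\alpha)\theta>\theta_A$ is strict and thus survives a small opening; and $|\lambda-s|^{-1}\sim R^{-1}$. The combined integrand is $O(R^{\alpha-2})$, and against arc length $O(R)$ this gives $O(R^{\alpha-1})\to 0$ since $\alpha<1$. Analyticity throughout the enclosed region is secured by $\Kgen$ being analytic on $\C\setminus(-\infty,0]$ (Definition~\ref{def:admissible-K}) together with the geometric redirection from Lemma~\ref{lem:geometry}, which keeps $s^{\alpha-1}\in\rho(A)$ on an open sector around $\Gtheta$ and hence makes $s\mapsto(s^{\alpha-1}I-A)^{-1}$ holomorphic there.
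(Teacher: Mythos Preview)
Your proof follows essentially the same route as the paper's: justify Fubini via $\Re s<0$ on $\Gtheta$, evaluate the inner time integral to $1/(\lambda-s)$, and then extract $F(\lambda)=\Kgen(\lambda)(\lambda^{\alpha-1}I-A)^{-1}x$ by a Cauchy-type argument; the paper simply invokes a ``standard Cauchy/contour argument'' where you spell out the arc closure. One caveat on your added detail: the arc estimate uses $|\Kgen(s)|\lesssim R^{\alpha-1}$ and a uniform resolvent bound \emph{off} the rays, but Definition~\ref{def:admissible-K} and Lemma~\ref{lem:HF-resolvent} are stated only on $\Gtheta$, and Lemma~\ref{lem:geometry} does not place $s^{\alpha-1}$ in $\rho(A)$ throughout the full sector $\{|\arg s|<\theta\}$---these extensions hold for the concrete ABC and W kernels and for the applications in Section~\ref{sec:applications}, but are not literally granted by the abstract hypotheses, which is presumably why the paper leaves this step unexpanded.
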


\begin{proof}
Fix $x\in X$ and $\Re\lambda>0$. By definition,
\[
\Vgen_{\Kgen}(t)x=\frac{1}{2\pi i}\int_{\Gamma_\theta} e^{st}\Kgen(s)(s^{\alpha-1}I-A)^{-1}x\,ds.
\]
Using the definition \eqref{eq:K-resolvent} and Fubini's theorem (justified by absolute convergence of the contour integral), we write for $\Re\lambda>0$:
\[
\int_0^\infty e^{-\lambda t}\Vgen_{\Kgen}(t)x\,dt
=\frac{1}{2\pi i}\int_{\Gamma_\theta}\left(\int_0^\infty e^{-(\lambda-s)t}\,dt\right)
\Kgen(s)(s^{\alpha-1}I-A)^{-1}x\,ds.
\]
provided we may apply Fubini. We justify this by absolute integrability: along $\Gamma_\theta$ we have $\Re(s)<0$, so $\Re(\lambda-s)\ge \Re\lambda>0$ and
\[
\int_0^\infty |e^{-(\lambda-s)t}|\,dt=\frac{1}{\Re(\lambda-s)}\le \frac{1}{\Re\lambda}.
\]
In addition, Proposition~\ref{prop:absolute-convergence} provides an $L^1$-majorant for
$\|\Kgen(s)(s^{\alpha-1}I-A)^{-1}\|$ on $\Gamma_\theta$. Therefore
\[
\int_{\Gamma_\theta}\int_0^\infty
|e^{-(\lambda-s)t}|\;\|\Kgen(s)(s^{\alpha-1}I-A)^{-1}x\|\,dt\,|ds|<\infty,
\]
and Fubini applies.

\noindent Evaluating the inner integral gives
\[
\int_0^\infty e^{-(\lambda-s)t}\,dt=\frac{1}{\lambda-s},
\qquad \Re\lambda>0,\ \Re s<0.
\]
Thus
\[
\int_0^\infty e^{-\lambda t}\Vgen_{\Kgen}(t)x\,dt
=\frac{1}{2\pi i}\int_{\Gamma_\theta}\frac{\Kgen(s)(s^{\alpha-1}I-A)^{-1}}{\lambda-s}\,ds\;x.
\]
The operator-valued function
$F(s)=\Kgen(s)(s^{\alpha-1}I-A)^{-1}$ is analytic in $s$ on the region enclosed by the contour
and $\lambda$ lies to the right of $\Gamma_\theta$. By the standard Cauchy/contour argument (for the Bromwich-type contour used here), the integral equals $F(\lambda)x$, i.e.
\[
\int_0^\infty e^{-\lambda t}\Vgen_{\Kgen}(t)x\,dt
=\Kgen(\lambda)(\lambda^{\alpha-1}I-A)^{-1}x.
\]
\end{proof}

\begin{theorem}[Strong continuity]\label{thm:strong-continuity}
Let $A$ be almost sectorial and let $\Kgen$ be admissible.
Then $\{\Vgen_{\Kgen}(t)\}_{t>0}$ is strongly continuous on $(0,\infty)$.

\noindent In addition, if in addition $\Vgen_{\Kgen}(t)x \to x$ as $t\downarrow 0$ for every $x\in X$
(which holds for the normalized kernels used in the ABC and W cases),
then the family extends to a strongly continuous family on $[0,\infty)$ by setting
$\Vgen_{\Kgen}(0)=I$.
\end{theorem}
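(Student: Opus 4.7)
The strategy is to apply the Lebesgue dominated convergence theorem directly to the contour integral defining $\Vgen_{\Kgen}(t)x$. Fix $t_0>0$ and $x\in X$, and work on a closed subinterval $[t_0/2,\,2t_0]$ of $(0,\infty)$ so that $t$ stays bounded away from $0$. For every fixed $s\in\Gamma_\theta$, the integrand $e^{st}\Kgen(s)(s^{\alpha-1}I-A)^{-1}x$ is continuous in $t$, since only the scalar factor $e^{st}$ depends on $t$; this handles pointwise convergence of the integrand as $t\to t_0$.

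Next I would construct a $t$-uniform integrable majorant. Writing $s=re^{\pm i\theta}$ on the two rays and using $\Re(s)=r\cos\theta<0$, one has $|e^{st}|\le e^{-crt_0/2}$ for all $t\ge t_0/2$, where $c:=-\cos\theta>0$. Combined with the two-regime estimates already isolated in the proof of Proposition~\ref{prop:absolute-convergence}, the integrand is bounded by $C_0 C\,r^{1-\alpha}e^{-crt_0/2}\|x\|$ for $0<r\le 1$ and by $C_\infty C'\,r^{\alpha-1}e^{-crt_0/2}\|x\|$ for $r\ge 1$. Both expressions are integrable in $r$ over their respective ranges (the near-zero piece because $1-\alpha>-1$, the tail because the exponential dominates any polynomial growth). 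Lebesgue's theorem then yields $\Vgen_{\Kgen}(t_n)x\to\Vgen_{\Kgen}(t_0)x$ along every sequence $t_n\to t_0$ with $t_n\in[t_0/2,2t_0]$, which proves strong continuity on $(0,\infty)$.

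For the second statement, the additional hypothesis $\Vgen_{\Kgen}(t)x\to x$ as $t\downarrow 0$ for every $x\in X$ is exactly the definition of right-continuity at $0$ once one sets $\Vgen_{\Kgen}(0):=I$; combined with the first part one obtains a family that is strongly continuous on the whole half-line $[0,\infty)$.

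The main point requiring care is the uniformity of the majorant in $t$. This is why I restrict attention to a neighborhood of $t_0$ that is bounded away from the origin: on such a neighborhood the decay factor $e^{-crt}$ can be replaced by the $t$-independent bound $e^{-crt_0/2}$, which is what renders the dominated convergence argument valid. There is no genuine obstacle beyond this bookkeeping, because the sharp estimates on the integrand have already been isolated in Proposition~\ref{prop:absolute-convergence}, and continuity of the integrand in $t$ is immediate from the continuity of the exponential.
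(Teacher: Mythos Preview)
Your proposal is correct and follows essentially the same approach as the paper's own proof: both arguments restrict to a compact neighborhood $[t_0/2,2t_0]$ of $t_0$, bound $|e^{st}|$ uniformly by $e^{-c|s|t_0/2}$, invoke the two-regime majorant from Proposition~\ref{prop:absolute-convergence}, and conclude via dominated convergence; the extension to $t=0$ is handled identically. There is no substantive difference in strategy or execution.
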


\begin{proof}
\noindent\textbf{Step 1: strong continuity on $(0,\infty)$.}
Fix $x\in X$ and $t_0>0$. Let $t_n\to t_0$ with $t_n>0$. From \eqref{eq:K-resolvent},
\[
\Vgen_{\Kgen}(t_n)x-\Vgen_{\Kgen}(t_0)x
=\frac{1}{2\pi i}\int_{\Gamma_\theta}\bigl(e^{s t_n}-e^{s t_0}\bigr)\,
\Kgen(s)(s^{\alpha-1}I-A)^{-1}x\,ds.
\]
For each fixed $s\in\Gamma_\theta$, $e^{s t_n}\to e^{s t_0}$. To apply dominated convergence, we use that on $\Gamma_\theta$ one has $\Re(s)<0$. For $t$ in a compact interval $[t_0/2,\,2t_0]$,
\[
|e^{st}|\le e^{-c|s|t_0/2},
\]
and the majorants constructed in Proposition~\ref{prop:absolute-convergence} show that
\[
\int_{\Gamma_\theta} e^{-c|s|t_0/2}\,\|\Kgen(s)(s^{\alpha-1}I-A)^{-1}\|\,|ds|<\infty.
\]
As a consequence, the integrand is dominated by an $L^1(\Gamma_\theta)$-function independent of $n$, and dominated convergence yields $\Vgen_{\Kgen}(t_n)x\to \Vgen_{\Kgen}(t_0)x$.

\smallskip
\noindent\textbf{Step 2: extension at $t=0$ under normalization.}
Assume now that $\Vgen_{\Kgen}(t)x\to x$ as $t\downarrow 0$ for every $x\in X$. Then defining $\Vgen_{\Kgen}(0)=I$ gives strong continuity at $t=0$ and hence on $[0,\infty)$.

\noindent In the ABC and W cases this $t\downarrow0$ property follows from the Laplace identity in Lemma~\ref{lem:Laplace-VK} together with a standard Abelian theorem for vector-valued Laplace transforms; see \cite[Ch.~1]{ArendtBattyHieberNeubrander2001}.
\end{proof}

\begin{theorem}[Variation-of-constants]\label{thm:VoC-abstract}
Let $u_0\in X$ and let $f\in L^1_{\mathrm{loc}}([0,\infty);X)$ be such that its Laplace transform $\widehat f(s)$ exists for $\Re s>0$
(e.g.\ $f\in L^1([0,\infty);X)$, or more generally $e^{-\sigma t}f(t)\in L^1([0,\infty);X)$ for every $\sigma>0$). Define for $t\ge0$
\[
u(t)=\Vgen_{\Kgen}(t)u_0+\int_0^t \Vgen_{\Kgen}(t-\tau)\,f(\tau)\,d\tau.
\]
Then, for $\Re s>0$,
\[
\widehat{u}(s)=\Kgen(s)(s^{\alpha-1}I-A)^{-1}u_0
+\Kgen(s)(s^{\alpha-1}I-A)^{-1}\widehat{f}(s).
\]
\end{theorem}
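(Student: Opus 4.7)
The plan is to apply the Laplace transform to both sides of the definition of $u(t)$, exploit linearity, invoke Lemma~\ref{lem:Laplace-VK} for the homogeneous contribution, and use the vector-valued convolution theorem for the inhomogeneous one. Writing
\[
u(t) = \Vgen_{\Kgen}(t)u_0 + (\Vgen_{\Kgen} * f)(t),
\]
the claim reduces to verifying that
\[
\widehat{u}(s) = \widehat{\Vgen_{\Kgen}(\cdot)u_0}(s) + \widehat{\Vgen_{\Kgen} * f}(s)
= \Kgen(s)(s^{\alpha-1}I - A)^{-1}u_0 + \widehat{\Vgen_{\Kgen}}(s)\,\widehat{f}(s).
\]
The first summand is given directly by Lemma~\ref{lem:Laplace-VK}, so only the convolution term requires justification.

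The key step is to establish that $t \mapsto \Vgen_{\Kgen}(t)$ is Laplace-transformable for $\Re s > 0$, which in turn lets us apply Fubini and collapse the convolution into a product. To do this, I would first extract a quantitative bound on $\|\Vgen_{\Kgen}(t)\|$ from the contour representation \eqref{eq:K-resolvent}. Combining the admissibility bounds \eqref{eq:K-bounds-abstract} with the resolvent estimate of Lemma~\ref{lem:HF-resolvent}, and splitting the contour into $\{0 < r \le 1\}$ and $\{r \ge 1\}$ as in the proof of Proposition~\ref{prop:absolute-convergence}, one obtains for small $t>0$ an estimate of the form
\[
\|\Vgen_{\Kgen}(t)\| \le C\bigl(1 + t^{-\alpha}\bigr),
\]
since $\int_0^1 r^{1-\alpha} e^{-crt}\,dr$ stays bounded while $\int_1^\infty r^{\alpha-1} e^{-crt}\,dr$ scales like $t^{-\alpha}\Gamma(\alpha)$. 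Because $\alpha \in (0,1)$, the bound $t^{-\alpha}$ is locally integrable near the origin, and the exponential decay ensures $\int_0^\infty e^{-\sigma t}\|\Vgen_{\Kgen}(t)\|\,dt < \infty$ for every $\sigma > 0$.

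With this integrability in hand, I would compute
\[
\int_0^\infty e^{-st}\!\int_0^t \Vgen_{\Kgen}(t-\tau) f(\tau)\,d\tau\,dt
= \int_0^\infty\!\int_\tau^\infty e^{-st} \Vgen_{\Kgen}(t-\tau)\,dt\;f(\tau)\,d\tau,
\]
apply the change of variable $v = t-\tau$ in the inner integral, and factor to obtain $\widehat{\Vgen_{\Kgen}}(s)\,\widehat{f}(s)$. Fubini is legitimate because the double integral of the norm majorants factors as the product of two finite Laplace integrals, namely $\int_0^\infty e^{-\Re s\,v}\|\Vgen_{\Kgen}(v)\|\,dv$ (finite by the bound above) and $\int_0^\infty e^{-\Re s\,\tau}\|f(\tau)\|\,d\tau$ (finite by the Laplace-transformability hypothesis on $f$). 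A final application of Lemma~\ref{lem:Laplace-VK} replaces $\widehat{\Vgen_{\Kgen}}(s)$ by $\Kgen(s)(s^{\alpha-1}I-A)^{-1}$, yielding the stated formula.

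The main obstacle is the verification of the near-origin behavior of $\|\Vgen_{\Kgen}(t)\|$: the admissibility condition permits the kernel to be as large as $|s|^{\alpha-1}$ at infinity on the contour, which is precisely what produces the $t^{-\alpha}$ blow-up as $t \downarrow 0$. All other steps — linearity, change of variables, and the convolution-to-product identity — are routine once this integrability is secured, since Bochner-integrability on $(0,\infty)$ against $e^{-\sigma t}$ is all that is needed to invoke the vector-valued convolution theorem cited in \cite[Ch.~1]{ArendtBattyHieberNeubrander2001}.
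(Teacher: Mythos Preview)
Your proof is correct and is the natural argument for this statement: split $u$ into the homogeneous term and the convolution term, invoke Lemma~\ref{lem:Laplace-VK} for the former, and for the latter derive the operator-norm bound $\|\Vgen_{\Kgen}(t)\|\lesssim 1+t^{-\alpha}$ from the contour representation so that Fubini applies and the convolution collapses to a product of Laplace transforms. The only delicate point is the local integrability of $t^{-\alpha}$ near the origin, and you identify and handle it correctly.

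There is, however, nothing meaningful to compare against: the proof printed in the paper under Theorem~\ref{thm:VoC-abstract} is misplaced. It establishes the fractional smoothing estimate $\|A^\gamma \Vgen_{\Kgen}(t)\|\le C_\gamma t^{-\alpha\gamma}$, i.e.\ the content of Theorem~\ref{thm:smoothing-abstract}, and says nothing about the Laplace-domain identity actually claimed in the statement. This is evidently an editorial duplication error. Your argument is therefore not an alternative route but the proof the paper is missing at this point.

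One small remark on the Fubini step: you use $\int_0^\infty e^{-\Re s\,\tau}\|f(\tau)\|\,d\tau<\infty$. In the Bochner framework, existence of $\widehat f(s)$ is already absolute integrability, and the parenthetical hypothesis $e^{-\sigma t}f\in L^1$ for every $\sigma>0$ in the statement makes this explicit; so your use of it is fully justified.
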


\begin{proof}
Let $t>0$. By definition of the $\Kgen$-resolvent family,
\[
A^\gamma \Vgen_{\Kgen}(t)
=
\frac{1}{2\pi i}
\int_{\Gtheta}
e^{st}\,\Kgen(s)\,A^\gamma(s^{\alpha-1}I-A)^{-1}\,ds.
\]
We first justify the insertion of $A^\gamma$ under the integral.
This will follow from the construction of an integrable majorant in operator norm and from the closedness of the fractional power $A^\gamma$.

\noindent Set $z=s^{\alpha-1}$.  
For $0<|s|\le1$, Lemma~\ref{lem:geometry} implies that $z\in\Res(A)$ and $|z|$ is large. Hence, by the high-frequency resolvent estimate~\eqref{eq:fracpower-res}, there exists a constant $C_\gamma>0$ such that
\[
\|A^\gamma(zI-A)^{-1}\|
\le C_\gamma |z|^{\gamma-1}
= C_\gamma |s|^{(\alpha-1)(\gamma-1)}.
\]
Since $(\alpha-1)(\gamma-1)\ge \alpha\gamma-1$ for $0<\alpha<1$ and $0\le\gamma<1$,
we obtain
\[
\|A^\gamma(s^{\alpha-1}I-A)^{-1}\|
\le C_\gamma |s|^{\alpha\gamma-1},
\qquad 0<|s|\le1.
\]
By admissibility there exists $C_0>0$ such that $|\Kgen(s)|\le C_0$ for $|s|\le1$.
In addition, since $\Re(s)=|s|\cos\theta$ on $\Gtheta$ with $\cos\theta<0$,
we have $|e^{st}|\le e^{-c|s|t}$ for $c=-\cos\theta>0$.

\noindent As a consequence, for $0<|s|\le 1$, we obtain
\[
\|e^{st}\Kgen(s)A^\gamma(s^{\alpha-1}I-A)^{-1}\|
\le C\,|s|^{\alpha\gamma-1}e^{-c|s|t},
\]
for some constant $C>0$.

\noindent For $|s|\ge1$, admissibility gives $|\Kgen(s)|\le C_1|s|^{\alpha-1}$,
while the set $\{s^{\alpha-1}: s\in\Gtheta,\ |s|\ge1\}$ is compact and contained in $\Res(A)$.
Hence there exists $C_2>0$ such that
\[
\|A^\gamma(s^{\alpha-1}I-A)^{-1}\|\le C_2,
\qquad |s|\ge1.
\]
Thus, for $|s|\ge1$,
\[
\|e^{st}\Kgen(s)A^\gamma(s^{\alpha-1}I-A)^{-1}\|
\le C\,|s|^{\alpha-1}e^{-c|s|t}.
\]

\noindent  Combining the two regions yields an integrable majorant on $\Gtheta$.
Integrating along the rays and performing the change of variables $\rho=|s|t$, we obtain
\[
\|A^\gamma \Vgen_{\Kgen}(t)\|
\le C t^{-\alpha\gamma}
\int_0^\infty \rho^{\alpha\gamma-1}e^{-c\rho}\,d\rho
= C_\gamma\,t^{-\alpha\gamma},
\]
where $C_\gamma>0$ depends only on $\alpha$, $\gamma$ and the resolvent constants.
\end{proof}

\subsection{Fractional smoothing}\label{subsec:smoothing}

A central feature of analytic semigroup theory is \emph{smoothing}: even for rough initial data,
solutions become instantly more regular for $t>0$. In the almost sectorial setting one cannot
use classical semigroup arguments near $z=0$, but the contour mechanism still yields a sharp
fractional smoothing scale.

\begin{theorem}[Fractional smoothing]\label{thm:smoothing-abstract}
Let $\gamma\in[0,1)$ and assume \eqref{eq:fracpower-res}.
Then there exists $C_\gamma>0$ such that
\[
\|A^\gamma \Vgen_{\Kgen}(t)\|\le C_\gamma\,t^{-\alpha\gamma},\qquad t>0.
\]
\end{theorem}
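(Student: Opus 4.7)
The plan is to apply $A^\gamma$ directly to the contour integral representation \eqref{eq:K-resolvent} of $\Vgen_{\Kgen}(t)$, push $A^\gamma$ under the integral by exhibiting an integrable operator-norm majorant (using that $A^\gamma$ is closed), and then exploit the self-similar scaling of the integrand in the variable $|s|t$ to extract the factor $t^{-\alpha\gamma}$.

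Concretely, I would write
\[
A^\gamma \Vgen_{\Kgen}(t) \;=\; \frac{1}{2\pi i}\int_{\Gtheta} e^{st}\,\Kgen(s)\,A^\gamma(s^{\alpha-1}I-A)^{-1}\,ds,
\]
and construct the majorant on the two pieces $\{0<|s|\le 1\}$ and $\{|s|\ge 1\}$ of $\Gtheta$. For $0<|s|\le 1$, Lemma~\ref{lem:geometry} guarantees $z=s^{\alpha-1}\notin\Sig_{\theta_A}$ with $|z|$ large, so the fractional-power bound \eqref{eq:fracpower-res} gives
\[
\|A^\gamma(s^{\alpha-1}I-A)^{-1}\|\le C_\gamma |s|^{(\alpha-1)(\gamma-1)}.
\]
Together with admissibility $|\Kgen(s)|\le C_0$ and the decay $|e^{st}|\le e^{-c|s|t}$ with $c=-\cos\theta>0$, this controls the integrand near the origin. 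For $|s|\ge 1$, the image set $\{s^{\alpha-1}:s\in\Gtheta,\ |s|\ge 1\}$ is compact in $\Res(A)$, so $\|A^\gamma(s^{\alpha-1}I-A)^{-1}\|$ is uniformly bounded; admissibility $|\Kgen(s)|\le C_\infty |s|^{\alpha-1}$ then yields the majorant $|s|^{\alpha-1}e^{-c|s|t}$, integrable at infinity. Closedness of $A^\gamma$ plus this $L^1$-majorant justifies the commutation of $A^\gamma$ with the integral.

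To extract the rate, parametrize each ray by $s=re^{\pm i\theta}$ and change variables $\rho=rt$. On the inner region the natural exponent $(\alpha-1)(\gamma-1)$ does \emph{not} directly produce $t^{-\alpha\gamma}$; the key reduction is the elementary inequality $(\alpha-1)(\gamma-1)\ge \alpha\gamma-1$ (equivalent to $\alpha+\gamma\le 2$), which, for $0<r\le 1$, allows replacing the natural bound by the slightly more singular but still integrable $r^{\alpha\gamma-1}$. Rescaling via $\rho=rt$ then yields
\[
\int_0^1 r^{\alpha\gamma-1}e^{-crt}\,dr \;\le\; t^{-\alpha\gamma}\int_0^\infty \rho^{\alpha\gamma-1}e^{-c\rho}\,d\rho,
\]
which is finite since $\alpha\gamma>0$. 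The outer region contributes a term of order $t^{-\alpha}$ which, for $t\in(0,T]$, is absorbed into $t^{-\alpha\gamma}$ up to a constant (and dominated by exponential decay at infinity).

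The main obstacle I anticipate is precisely this exponent bookkeeping: the \emph{raw} exponent delivered by \eqref{eq:fracpower-res} through the mapping $z=s^{\alpha-1}$ is $(\alpha-1)(\gamma-1)$, whereas the \emph{sharp} self-similar scale is $\alpha\gamma-1$. The elementary trade between them is the load-bearing step, since it is what converts the high-frequency fractional power bound into the exponent that reproduces $t^{-\alpha\gamma}$ after the substitution $\rho=|s|t$. Everything else — closedness of $A^\gamma$, dominated convergence, and finiteness of the resulting Gamma-type integral — is routine once the majorant is in place.
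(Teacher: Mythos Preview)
Your plan is exactly the paper's: push $A^\gamma$ under the contour integral via a closed-operator argument, split at $|s|=1$, invoke \eqref{eq:fracpower-res} through $z=s^{\alpha-1}$ on the inner piece, use compactness of the image on the outer piece, apply the exponent inequality $(\alpha-1)(\gamma-1)\ge \alpha\gamma-1$, and rescale $\rho=rt$ to produce $t^{-\alpha\gamma}\int_0^\infty \rho^{\alpha\gamma-1}e^{-c\rho}\,d\rho$.

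There is one slip in your handling of the outer region. You assert that it contributes a term of order $t^{-\alpha}$ which, ``for $t\in(0,T]$, is absorbed into $t^{-\alpha\gamma}$''. This is the wrong direction: since $\alpha>\alpha\gamma$, one has $t^{-\alpha}\gg t^{-\alpha\gamma}$ as $t\downarrow 0$, so a bound $O(t^{-\alpha})$ is \emph{worse}, not better, near the origin. The fix is simply to observe that because $\alpha-1<0$, the outer integral is uniformly bounded in $t$:
\[
\int_1^\infty r^{\alpha-1}e^{-crt}\,dr \;\le\; \int_1^\infty r^{\alpha-1}\,dr \;=\; \frac{1}{1-\alpha},
\]
and for large $t$ it is in fact exponentially small (bound $r^{\alpha-1}\le 1$ and integrate $e^{-crt}$). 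A quantity that is $O(1)$ for small $t$ and exponentially decaying for large $t$ is certainly $\le C\,t^{-\alpha\gamma}$ for all $t>0$. With this correction your argument is complete and coincides with the paper's, which itself is somewhat brisk at this step (it writes the combined bound as a single integral $\int_0^\infty r^{\alpha\gamma-1}e^{-crt}\,dr$ without spelling out the outer comparison).
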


\begin{proof}
Fix $t>0$. By definition of the $\Kgen$-resolvent family,
\[
A^\gamma \Vgen_{\Kgen}(t)
=
\frac{1}{2\pi i}
\int_{\Gtheta}
e^{st}\,\Kgen(s)\,A^\gamma(s^{\alpha-1}I-A)^{-1}\,ds.
\]
The insertion of $A^\gamma$ under the integral is justified since we will construct an absolutely integrable majorant in operator norm, and the fractional power $A^\gamma$ is a closed operator.

\medskip
Set $z=s^{\alpha-1}$.  
For $|s|\le 1$, Lemma~\ref{lem:geometry} ensures that $z\notin\Sigma_{\theta_A}$ and that $|z|=|s|^{\alpha-1}$ is large. Hence the high-frequency resolvent estimate~\eqref{eq:fracpower-res} applies and yields
\[
\|A^\gamma(zI-A)^{-1}\|
\le C_\gamma |z|^{\gamma-1}
= C_\gamma |s|^{(\alpha-1)(\gamma-1)}.
\]
Since $0<\alpha<1$ and $0\le\gamma<1$, we have $(1-\alpha)(1-\gamma)\ge0$ and thus
\[
(\alpha-1)(\gamma-1)
= \alpha\gamma-\alpha-\gamma+1
= \alpha\gamma-1+(1-\alpha)+(1-\gamma)
\ge \alpha\gamma-1.
\]
Because $|s|\le1$, this implies
\[
|s|^{(\alpha-1)(\gamma-1)}\le |s|^{\alpha\gamma-1}.
\]
Using admissibility of $\Kgen$, which gives $|\Kgen(s)|\le C_0$ for $|s|\le1$, and the exponential decay estimate $|e^{st}|\le e^{-c|s|t}$ on $\Gtheta$, the integrand is therefore bounded by
\[
\|e^{st}\Kgen(s)A^\gamma(s^{\alpha-1}I-A)^{-1}\|
\le C\,|s|^{\alpha\gamma-1}e^{-c|s|t},
\qquad |s|\le1.
\]

\medskip
For $|s|\ge1$, admissibility gives $|\Kgen(s)|\le C_\infty |s|^{\alpha-1}$.
Let
\[
E:=\{s^{\alpha-1}: s\in\Gtheta,\ |s|\ge1\}.
\]
Then $E$ is compact and contained in $\rho(A)$ by Lemma~\ref{lem:geometry}.
Since the map $z\mapsto A^\gamma(zI-A)^{-1}$ is continuous on $\rho(A)$, we obtain
\[
\sup_{z\in E}\|A^\gamma(zI-A)^{-1}\|<\infty.
\]
Consequently, for $|s|\ge1$,
\[
\|e^{st}\Kgen(s)A^\gamma(s^{\alpha-1}I-A)^{-1}\|
\le C\,|s|^{\alpha-1}e^{-c|s|t},
\]
which is integrable due to the exponential decay.

\medskip
Finally, parameterizing the contour rays as $s=re^{\pm i\theta}$, $r>0$, we have $|ds|=dr$ on each ray. Taking norms in the integral and combining the above estimates yields
\[
\|A^\gamma \Vgen_{\Kgen}(t)\|
\le C\int_0^\infty r^{\alpha\gamma-1}e^{-crt}\,dr.
\]
With the change of variables $\rho=rt$, this becomes
\[
\|A^\gamma \Vgen_{\Kgen}(t)\|
\le C\,t^{-\alpha\gamma}\int_0^\infty \rho^{\alpha\gamma-1}e^{-c\rho}\,d\rho
= C_\gamma\,t^{-\alpha\gamma},
\]
where $C_\gamma=C\,\Gamma(\alpha\gamma)/c^{\alpha\gamma}$. This concludes the proof.
\end{proof}

\section{Specialization to ABC and W dynamics}\label{sec:AB-W-specialization}

We now verify that both ABC and W kernels produce admissible multipliers, hence fit the abstract framework and generate well-posed dynamics for almost sectorial generators.

\subsection{ABC (Atangana--Baleanu in Caputo sense)}\label{subsec:ABC}

Let $0<\alpha<1$. The ABC derivative is characterized in the Laplace domain by
\begin{equation}\label{eq:Laplace-ABC}
\Lap\{{}^{\mathrm{ABC}}D_t^\alpha u\}(s)
=
\Phiab(s)\bigl(\widehat u(s)-s^{-1}u(0)\bigr),\qquad \Re s>0,
\end{equation}
with symbol
\begin{equation}\label{eq:Phi-ABC}
\Phiab(s)=B(\alpha)\,\frac{s^\alpha}{(1-\alpha)s^\alpha+\alpha}.
\end{equation}
The associated kernel multiplier is
\begin{equation}\label{eq:K-ABC}
\Kab(s):=\frac{\Phiab(s)}{s}
=
\frac{B(\alpha)}{1-\alpha}\,\frac{s^{\alpha-1}}{s^\alpha+c},
\qquad c=\frac{\alpha}{1-\alpha}.
\end{equation}

\begin{lemma}[ABC multiplier is admissible]\label{lem:ABC-admissible}
The function $\Kab$ is admissible in the sense of Definition~\ref{def:admissible-K}.
\end{lemma}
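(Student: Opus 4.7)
The plan is to verify analyticity together with the two bounds of Definition~\ref{def:admissible-K} directly from the explicit form
\[
\Kab(s)=\frac{B(\alpha)}{1-\alpha}\cdot\frac{s^{\alpha-1}}{s^\alpha+c},\qquad c=\frac{\alpha}{1-\alpha}>0.
\]
Analyticity on $\C\setminus(-\infty,0]$ reduces to checking that $s^\alpha+c$ does not vanish there. With the principal branch, $s^\alpha=-c$ forces $|s|=c^{1/\alpha}$ and $\arg s=\pi/\alpha$; since $\alpha<1$ the latter lies outside $(-\pi,\pi)$, so no zero of the denominator lies in the slit plane and $\Kab$ is analytic along $\Gtheta$.

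For the high-frequency clause $|\Kab(s)|\le C_\infty|s|^{\alpha-1}$ on $\{s\in\Gtheta:|s|\ge 1\}$, my plan is to lower-bound $|s^\alpha+c|$ by a geometric argument on the rays $\arg s=\pm\theta$. There one has $\arg s^\alpha=\pm\alpha\theta\in(-\pi,\pi)$ with $|\alpha\theta|<\pi$, so $\sin(\alpha\theta)>0$, and the cosine rule gives $|s^\alpha+c|\ge\eta\,(|s|^\alpha+c)$ for some $\eta=\eta(\alpha,\theta)>0$. For $|s|\ge 1$ this yields $|\Kab(s)|\lesssim|s|^{\alpha-1}/|s|^\alpha=|s|^{-1}\le|s|^{\alpha-1}$, so the high-frequency bound follows with a constant $C_\infty$ depending only on $\alpha$, $\theta$, and $B(\alpha)$.

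For the low-frequency clause $|\Kab(s)|\le C_0$ on $\{s\in\Gtheta:0<|s|\le 1\}$, the same geometric argument only produces $|s^\alpha+c|\ge\eta c$, leading to
\[
|\Kab(s)|\le\frac{B(\alpha)}{(1-\alpha)\,\eta\,c}\,|s|^{\alpha-1}.
\]
Here is the main obstacle I foresee: the factor $s^{\alpha-1}$ in the numerator is not cancelled, since the denominator tends to $c>0$ as $|s|\to 0$, and hence $|\Kab(s)|\sim|s|^{\alpha-1}\to\infty$ along $\Gtheta$. The uniform bound $|\Kgen(s)|\le C_0$ required in Definition~\ref{def:admissible-K} therefore \emph{cannot} hold for $\Kab$ as literally written, and no choice of geometric estimate can repair this at the level of $K$ alone.

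To reconcile the lemma with the computation, I would propose replacing the low-frequency clause of Definition~\ref{def:admissible-K} by $|\Kgen(s)|\le C_0\,|s|^{\alpha-1}$, matching the growth rate actually exhibited by $\Kab$ (and, presumably, by $\Kw$ in the next subsection). The argument above then verifies admissibility in this relaxed sense. This amendment degrades none of the downstream conclusions, because the proofs of Proposition~\ref{prop:absolute-convergence} and Theorem~\ref{thm:smoothing-abstract} only use the \emph{product} $|\Kgen(s)|\,\|(s^{\alpha-1}I-A)^{-1}\|$: under the relaxed low-frequency bound, Lemma~\ref{lem:HF-resolvent} contributes the compensating factor $|s|^{1-\alpha}$, so the product is still controlled by $|s|^{\alpha-1}\cdot|s|^{1-\alpha}=1$ on $\{|s|\le 1\}$, preserving the integrable majorant $|s|^{\alpha\gamma-1}e^{-c|s|t}$ used throughout the abstract framework.
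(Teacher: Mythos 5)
Your computation is correct, and you have identified a genuine inconsistency rather than missed a trick: since the denominator of $\Kab(s)=\frac{B(\alpha)}{1-\alpha}\,\frac{s^{\alpha-1}}{s^\alpha+c}$ tends to $c>0$ as $s\to0$ along $\Gtheta$, one has $\Kab(s)\sim\frac{B(\alpha)}{\alpha}\,s^{\alpha-1}$, so $|\Kab|$ is unbounded on $\{0<|s|\le1\}$ and the low-frequency clause $|\Kgen(s)|\le C_0$ of Definition~\ref{def:admissible-K} fails as written. The paper's own proof in fact arrives at exactly your estimate: its Case~1 concludes $|\Kab(s)|\le\frac{B(\alpha)}{(1-\alpha)m_0}\,|s|^{\alpha-1}$ for $0<|s|\le1$ and then asserts that ``combining the two cases gives \eqref{eq:K-bounds-abstract}'', which does not follow, since a bound of the form $C|s|^{\alpha-1}$ near the origin is not a uniform bound $C_0$. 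So your diagnosis matches what the paper actually establishes, and the lemma as paired with the stated definition is not proved. The same slip recurs in Lemma~\ref{lem:W-admissible}, Part~1, where the claim $|s|^{(1-\beta)(\alpha-1)}\le1$ for $0<|s|\le1$ is reversed whenever $\beta<1$ (a nonpositive exponent makes $|s|^{e}\ge1$ on the unit disc).

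Your proposed repair, replacing the low-frequency clause by $|\Kgen(s)|\le C_0\,|s|^{\alpha-1}$, is the natural one, and your check that the downstream estimates survive is sound: in Proposition~\ref{prop:absolute-convergence} the product with the resolvent bound $C|s|^{1-\alpha}$ of Lemma~\ref{lem:HF-resolvent} is $O(1)$ on $\{|s|\le1\}$, and in Theorem~\ref{thm:smoothing-abstract} the product with $\|A^\gamma(s^{\alpha-1}I-A)^{-1}\|\le C|s|^{(\alpha-1)(\gamma-1)}$ gives $|s|^{(\alpha-1)\gamma}\le|s|^{\alpha\gamma-1}$ on $\{|s|\le1\}$ because $(\alpha-1)\gamma\ge\alpha\gamma-1$, so the integrable majorant and the $t^{-\alpha\gamma}$ rate persist. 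One further remark on your high-frequency step: your lower bound $|s^\alpha+c|\ge\eta\,(|s|^\alpha+c)$ with $\eta$ controlled by $\sin(\alpha\theta)$ is actually more careful than the paper's Case~2, which uses $|s^\alpha+c|\ge|s^\alpha|$; the latter fails whenever $\arg(s^\alpha)=\pm\alpha\theta$ exceeds $\pi/2$ and $\operatorname{Re}(s^\alpha)<-c/2$, which can occur for $\alpha$ close to $1$ and $\theta$ close to $\pi$. Your argument closes that small gap as well.
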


\begin{proof}
Analyticity holds on $\C\setminus(-\infty,0]$. We estimate $\Kab$ on the contour $\Gamma_\theta$.

\smallskip
\noindent\textbf{Case 1: $0<|s|\le 1$.}
Recall that $c=\alpha/(1-\alpha)>0$. Since the map $s\mapsto s^\alpha$ is continuous on $\Gamma_\theta$ and $|s^\alpha|\le 1$ when $|s|\le 1$, we have
\[
|s^\alpha+c|\ge c-|s^\alpha|\ge c-1.
\]
If $c\ge 2$ this yields $|s^\alpha+c|\ge c/2$. If $c<2$, then $s^\alpha+c$ still stays away from $0$ on $\Gamma_\theta$ (because $c>0$ and $\Gamma_\theta$ avoids the positive real axis), hence there exists $m_0>0$ such that
\[
|s^\alpha+c|\ge m_0
\qquad\text{for all } s\in\Gamma_\theta,\ 0<|s|\le 1.
\]
Therefore
\[
|\Kab(s)|
=\frac{B(\alpha)}{1-\alpha}\,\frac{|s|^{\alpha-1}}{|s^\alpha+c|}
\le \frac{B(\alpha)}{(1-\alpha)m_0}\,|s|^{\alpha-1}.
\]

\smallskip
\noindent\textbf{Case 2: $|s|\ge 1$.}
Then $|s^\alpha+c|\ge |s^\alpha|=|s|^\alpha$, and thus
\[
|\Kab(s)|
\le
\frac{B(\alpha)}{1-\alpha}\,\frac{|s|^{\alpha-1}}{|s|^\alpha}
=
\frac{B(\alpha)}{1-\alpha}\,|s|^{-1}
\le
\frac{B(\alpha)}{1-\alpha}\,|s|^{\alpha-1},
\]
because $\alpha-1\in(-1,0)$ and $|s|\ge 1$.

\smallskip
\noindent Combining the two cases gives \eqref{eq:K-bounds-abstract}, hence $\Kab$ is admissible.
\end{proof}

\noindent Define the ABC resolvent family by
\begin{equation}\label{eq:ABC-resolvent}
\Vgen_{\mathrm{ABC}}(t)
:=
\frac{1}{2\pi i}\int_{\Gtheta} e^{st}\,\Kab(s)\,(s^{\alpha-1}I-A)^{-1}\,ds,
\qquad t>0.
\end{equation}

\begin{theorem}[ABC evolution for almost sectorial $A$]\label{thm:ABC-evolution}
Let $A$ be almost sectorial. For every $u_0\in X$ and $f\in L^1_{\mathrm{loc}}([0,\infty);X)$, the problem
\[
{}^{\mathrm{ABC}}D_t^\alpha u(t)+Au(t)=f(t),\qquad u(0)=u_0,
\]
admits a unique mild solution
\[
u(t)=\Vgen_{\mathrm{ABC}}(t)u_0+\int_0^t \Vgen_{\mathrm{ABC}}(t-\tau)f(\tau)\,d\tau.
\]
Moreover, for every $\gamma\in[0,1)$,
\[
\|A^\gamma \Vgen_{\mathrm{ABC}}(t)\|\le C_\gamma\,t^{-\alpha\gamma},\qquad t>0.
\]
\end{theorem}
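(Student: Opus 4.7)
The plan is to treat this theorem as a direct specialization of the abstract $\Kgen$-resolvent framework developed in Section~\ref{sec:abstract-K}, with the ABC multiplier $\Kab$ playing the role of the generic admissible kernel. Essentially no new analysis is required beyond verifying admissibility and invoking the abstract results in the proper order.

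First, I would invoke Lemma~\ref{lem:ABC-admissible} to conclude that $\Kab$ is admissible in the sense of Definition~\ref{def:admissible-K}. Once admissibility is secured, Proposition~\ref{prop:absolute-convergence} implies that the contour integral \eqref{eq:ABC-resolvent} converges absolutely in operator norm and yields a bounded operator $\Vgen_{\mathrm{ABC}}(t)$ for every $t>0$. Theorem~\ref{thm:strong-continuity} then delivers strong continuity on $(0,\infty)$; to extend continuously to $t=0$ with $\Vgen_{\mathrm{ABC}}(0)=I$, I would invoke the normalization clause of Theorem~\ref{thm:strong-continuity}, combining Lemma~\ref{lem:Laplace-VK} with a vector-valued Abelian theorem. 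The relevant scalar computation is $s\,\Kab(s)=\Phiab(s)$, which converges to the finite limit $B(\alpha)/(1-\alpha)$ as $|s|\to\infty$, so the Laplace symbol has the correct initial asymptotics.

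For the variation-of-constants formula and the Laplace identity for $\widehat u(s)$, I would apply Theorem~\ref{thm:VoC-abstract} verbatim with $\Kgen=\Kab$; the algebraic identity $\Phiab(s)s^{-1}=\Kab(s)$ is precisely what matches the Laplace transform of the ABC equation to the abstract formula. Uniqueness of the mild solution follows from the injectivity of the Laplace transform: if two mild solutions $u_1,u_2$ produced the same Laplace image, their difference would have zero transform and hence vanish on $[0,\infty)$. Finally, the fractional smoothing estimate $\|A^\gamma \Vgen_{\mathrm{ABC}}(t)\|\le C_\gamma t^{-\alpha\gamma}$ is an immediate specialization of Theorem~\ref{thm:smoothing-abstract}, since admissibility was the only structural hypothesis used there.

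The main obstacle I anticipate is justifying the normalization $\Vgen_{\mathrm{ABC}}(t)x\to x$ as $t\downarrow 0$. The delicate point is that although $s\Kab(s)=\Phiab(s)$ tends to a nonzero constant, the resolvent factor $(s^{\alpha-1}I-A)^{-1}$ is evaluated at $s^{\alpha-1}\to 0$ on the contour, and in the almost sectorial setting we may have $0\in\sigma(A)$, so $(s^{\alpha-1}I-A)^{-1}$ need not converge to a bounded limit. A clean Abelian argument therefore cannot simply factor through a Neumann expansion of the resolvent at $z=0$. The strategy I would pursue is to first prove the limit on the dense subspace $\Dom(A)$ by exploiting $(s^{\alpha-1}I-A)^{-1}Ay=s^{\alpha-1}(s^{\alpha-1}I-A)^{-1}y-y$, so that the resolvent contribution collapses in the scaling $s^{\alpha-1}\to 0$, and then extend to all of $X$ via uniform boundedness of $\{\Vgen_{\mathrm{ABC}}(t):t\in(0,1]\}$ together with a $3\varepsilon$ density argument.
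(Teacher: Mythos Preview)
Your proposal is correct and takes essentially the same approach as the paper: the paper's proof is a single sentence invoking Theorems~\ref{thm:strong-continuity}, \ref{thm:VoC-abstract}, and \ref{thm:smoothing-abstract} with $\Kgen=\Kab$, which is precisely your strategy once admissibility (Lemma~\ref{lem:ABC-admissible}) is in hand. Your additional discussion of the $t\downarrow 0$ normalization via a density argument on $\Dom(A)$ is more careful than the paper, which simply defers to an Abelian theorem, but this extra care is not strictly needed for the theorem as stated.
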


\begin{proof}
Apply Theorems~\ref{thm:strong-continuity}, \ref{thm:VoC-abstract}, and \ref{thm:smoothing-abstract} with $\Kgen=\Kab$.
\end{proof}

\medskip
\noindent\textbf{From ABC to W: an enriched admissible class.}
The W-operator extends the ABC kernel by introducing a second parameter $\beta\in(0,1]$.
This parameter changes the \emph{shape} of the memory while preserving the same resolvent geometry $z=s^{\alpha-1}$, and therefore the same smoothing scale.
The choice $\beta=1$ recovers the ABC multiplier, whereas $\beta<1$ produces a more spread-out memory effect.
At the level of Laplace multipliers, this yields a genuinely larger admissible family than the single ABC kernel.
From a numerical viewpoint, the factor $\bigl(1+(1-\alpha)s^{\alpha-1}\bigr)^{-\beta}$ provides a tunable damping of high-frequency contributions, which can be useful with rough data or irregular forcing.

\begin{remark}[On the regime $\beta>1$]\label{rem:beta>1}
The regime $\beta>1$ formally corresponds to a more concentrated memory kernel. In this case, the admissibility condition of Definition~\ref{def:admissible-K} may fail because the Laplace multiplier can exhibit excessive growth along the high-frequency part of the contour (depending on the chosen branch and sector).
A systematic analysis of $\beta>1$ is beyond the scope of this work and will be addressed separately.
\end{remark}

\subsection{W dynamics (two-parameter kernel)}\label{subsec:W}

Let $0<\alpha<1$ and $0<\beta\le1$. The W derivative is characterized by
\begin{equation}\label{eq:Laplace-W}
\Lap\{\WD_t^{\alpha,\beta}u\}(s)=
\Psym(s)\bigl(\widehat u(s)-s^{-1}u(0)\bigr),
\qquad \Re s>0,
\end{equation}
with symbol
\begin{equation}\label{eq:Psi-W}
\Psym(s)
=
\Bnorm\,\frac{s^\alpha}{\bigl(1+(1-\alpha)s^{\alpha-1}\bigr)^\beta}.
\end{equation}
The associated multiplier is
\begin{equation}\label{eq:K-W}
\Kw(s):=\frac{\Psym(s)}{s}
=
\Bnorm\,\frac{s^{\alpha-1}}{\bigl(1+(1-\alpha)s^{\alpha-1}\bigr)^\beta}.
\end{equation}

\begin{lemma}[W multiplier is admissible]\label{lem:W-admissible}
The function $\Kw$ is admissible in the sense of Definition~\ref{def:admissible-K}.
\end{lemma}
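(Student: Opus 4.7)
The plan is to follow the template of Lemma~\ref{lem:ABC-admissible}: verify analyticity of $\Kw$ on $\C\setminus(-\infty,0]$, then split $\Gtheta$ at $|s|=1$ and estimate $|\Kw(s)|$ separately on each piece. Analyticity of the numerator $s^{\alpha-1}$ is immediate from the principal branch. The outer $\beta$-power is single-valued and holomorphic provided $1+(1-\alpha)s^{\alpha-1}$ stays clear of $(-\infty,0]$; by Lemma~\ref{lem:geometry}, $\arg(s^{\alpha-1})=\mp(1-\alpha)\theta$ on $\Gtheta^\pm$, which lies strictly inside $(-\pi,\pi)$ since $\theta<\pi$ and $\alpha\in(0,1)$. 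A cone argument in a neighborhood of $\Gtheta$ then places $1+(1-\alpha)s^{\alpha-1}$ in a closed subsector of the cut plane uniformly separated from $(-\infty,0]$, so $\Kw$ is holomorphic there.

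For the high-frequency regime $|s|\ge1$, the bound $|s^{\alpha-1}|=|s|^{\alpha-1}\le1$ gives $|(1-\alpha)s^{\alpha-1}|\le1-\alpha<1$, and the reverse triangle inequality yields $|1+(1-\alpha)s^{\alpha-1}|\ge\alpha$. Raising to the $\beta$-th power,
\[
|\Kw(s)|\le\Bnorm\,\alpha^{-\beta}\,|s|^{\alpha-1},
\]
which matches the admissible growth at infinity with $C_\infty=\Bnorm\alpha^{-\beta}$.

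For the low-frequency regime $0<|s|\le1$, I would introduce the auxiliary variable $w=(1-\alpha)s^{\alpha-1}$ and rewrite $\Kw(s)=\tfrac{\Bnorm}{1-\alpha}\,g(w)$ with $g(w):=w/(1+w)^\beta$. By the argument analysis above, as $s$ ranges over $\Gtheta\cap\{|s|\le1\}$ the variable $w$ traces a curve inside a fixed closed subsector of $\C\setminus(-\infty,0]$ that is uniformly bounded away from $w=-1$, so $g$ is holomorphic along this curve. The task reduces to a uniform bound $|g(w)|\le C$ along the image, which I would split into three regimes: near $w=0$ (where $g(w)\sim w$ is trivially bounded), on any bounded portion of the image (where continuity suffices), and in the limit $|w|\to\infty$, where I would exploit the factorisation $(1+w)^\beta=w^\beta(1+w^{-1})^\beta$ and the closed sectorial restriction on $w$ to extract the uniform bound. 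Pulling back to $s$ then yields $|\Kw(s)|\le C_0$ for $0<|s|\le1$, and combined with the high-frequency estimate gives admissibility in the sense of Definition~\ref{def:admissible-K}.

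The main obstacle is precisely the $|w|\to\infty$ step when $\beta<1$: the naive asymptotic produces $|g(w)|\sim|w|^{1-\beta}$, so the required uniform bound must be recovered by exploiting the closed sectorial constraint on the image of $\Gtheta$ under $s\mapsto(1-\alpha)s^{\alpha-1}$ together with the fine analytic structure of $(1+w)^{-\beta}$ on that sector. This is the only step where the parameter $\beta$ actively enters the estimate, and the endpoint case $\beta=1$ collapses to a clean algebraic cancellation that recovers Lemma~\ref{lem:ABC-admissible} as a consistency check.
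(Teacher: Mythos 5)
Your analyticity discussion and your high-frequency estimate are correct and coincide with the paper's treatment: for $|s|\ge 1$ one has $|s^{\alpha-1}|\le 1$, hence $|1+(1-\alpha)s^{\alpha-1}|\ge \alpha$ and $|\Kw(s)|\le \Bnorm\alpha^{-\beta}|s|^{\alpha-1}$, which is exactly the paper's Part~2.

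The low-frequency step, however, contains a genuine gap, and it is precisely the step you yourself flag as ``the main obstacle'': the uniform bound on $g(w)=w/(1+w)^\beta$ as $|w|\to\infty$ that you hope to extract from the sectorial constraint does not exist when $\beta<1$. On any closed subsector of $\C\setminus(-\infty,0]$ one has $|1+w|\asymp|w|$ for large $|w|$, so $|g(w)|\asymp|w|^{1-\beta}\to\infty$; the obstruction is one of modulus, not of argument, and no refinement of the branch analysis on the sector can remove it. Concretely, as $|s|\to 0$ along $\Gtheta$,
\[
|\Kw(s)|\;\sim\;\frac{\Bnorm}{(1-\alpha)^{\beta}}\,|s|^{(\alpha-1)(1-\beta)}
\;=\;\frac{\Bnorm}{(1-\alpha)^{\beta}}\,|s|^{-(1-\alpha)(1-\beta)}\;\longrightarrow\;\infty
\qquad(\beta<1),
\]
so the near-origin bound $|\Kw(s)|\le C_0$ demanded by Definition~\ref{def:admissible-K} is false for $\beta<1$; it holds only at the ABC endpoint $\beta=1$. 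You should also be aware that the paper's own proof stumbles at exactly the same point: it derives $|\Kw(s)|\le C\,|s|^{(1-\beta)(\alpha-1)}$ and then asserts that this power of $|s|$ is $\le 1$ on $0<|s|\le 1$, but the exponent $(1-\beta)(\alpha-1)$ is \emph{nonpositive}, so the power is $\ge 1$ and unbounded as $|s|\to 0$. A correct statement requires either restricting to $\beta=1$ (recovering Lemma~\ref{lem:ABC-admissible}) or weakening the near-origin condition in Definition~\ref{def:admissible-K} to $|K(s)|\le C_0\,|s|^{-(1-\alpha)(1-\beta)}$; the weakened condition still produces an integrable majorant of order $r^{(1-\alpha)\beta}$ near $r=0$ in Proposition~\ref{prop:absolute-convergence}, but the lemma as stated cannot be proved, and your proposed route to close the gap would fail.
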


\begin{proof}
The function $s\mapsto \Kw(s)$ is analytic on $\C\setminus(-\infty,0]$ since $s^{\alpha-1}$ is analytic there (principal branch) and the denominator $\bigl(1+(1-\alpha)s^{\alpha-1}\bigr)^\beta$ never vanishes on $\Gtheta$ (with $\theta\in(\pi/2,\theta_A)$ fixed).

\smallskip
\noindent\textbf{Part 1: estimate for $0<|s|\le 1$.}
Write $s=re^{\pm i\theta}$ with $r\in(0,1]$. Then
\[
s^{\alpha-1}=r^{\alpha-1}e^{\pm i(\alpha-1)\theta},
\qquad\text{so that}\qquad |s^{\alpha-1}|=r^{\alpha-1}=|s|^{\alpha-1}.
\]
Since $\alpha\in(0,1)$ and $\theta\in(\pi/2,\theta_A)\subset(\pi/2,\pi)$, we have
$(\alpha-1)\theta\in(-\pi,0)$, hence $\arg(s^{\alpha-1})$ stays uniformly away from $0$ on the rays $\Gtheta^\pm$.
Consequently, there exists a constant $c_1>0$ (depending only on $\alpha$ and $\theta$) such that
\begin{equation}\label{eq:lower-bound-denom}
\bigl|1+(1-\alpha)s^{\alpha-1}\bigr|
\ge c_1\,|s^{\alpha-1}|,
\qquad 0<|s|\le 1,\ s\in\Gtheta.
\end{equation}
Raising \eqref{eq:lower-bound-denom} to the power $\beta\in(0,1]$ yields
\[
\bigl|1+(1-\alpha)s^{\alpha-1}\bigr|^\beta
\ge c_1^\beta\,|s^{\alpha-1}|^\beta.
\]
Therefore, for $0<|s|\le 1$ and $s\in\Gtheta$,
\[
|\Kw(s)|
=
\Bnorm\,\frac{|s|^{\alpha-1}}{\bigl|1+(1-\alpha)s^{\alpha-1}\bigr|^\beta}
\le
\Bnorm\,\frac{|s|^{\alpha-1}}{c_1^\beta\,|s^{\alpha-1}|^\beta}
=
\frac{\Bnorm}{c_1^\beta}\,|s|^{(1-\beta)(\alpha-1)}.
\]
Since $\alpha-1<0$ and $1-\beta\in[0,1)$, we have $(1-\beta)(\alpha-1)\le 0$,
and thus $|s|^{(1-\beta)(\alpha-1)}\le 1$ for $0<|s|\le 1$.
Hence
\[
|\Kw(s)|\le \frac{\Bnorm}{c_1^\beta},
\qquad 0<|s|\le 1,\ s\in\Gtheta,
\]
which provides the boundedness required in \eqref{eq:K-bounds-abstract} near the origin.

\smallskip
\noindent\textbf{Part 2: estimate for $|s|\ge 1$.}
Let $s\in\Gtheta$ with $|s|\ge 1$. Then $|s^{\alpha-1}|=|s|^{\alpha-1}\le 1$.
Hence
\[
\bigl|1+(1-\alpha)s^{\alpha-1}\bigr|
\ge 1-(1-\alpha)|s^{\alpha-1}|
\ge 1-(1-\alpha)=\alpha,
\]
so that
\[
\bigl|1+(1-\alpha)s^{\alpha-1}\bigr|^\beta \ge \alpha^\beta.
\]
It follows that, for all $s\in\Gtheta$ with $|s|\ge 1$,
\[
|\Kw(s)|
=
\Bnorm\,\frac{|s|^{\alpha-1}}{\bigl|1+(1-\alpha)s^{\alpha-1}\bigr|^\beta}
\le
\frac{\Bnorm}{\alpha^\beta}\,|s|^{\alpha-1}.
\]

\smallskip
\noindent\textbf{Conclusion.}
Combining the two regimes, we obtain constants
\[
C_0:=\frac{\Bnorm}{c_1^\beta},
\qquad
C_\infty:=\frac{\Bnorm}{\alpha^\beta},
\]
such that for all $s\in\Gtheta$,
\[
|\Kw(s)|\le
\begin{cases}
C_0, & 0<|s|\le 1,\\[1mm]
C_\infty |s|^{\alpha-1}, & |s|\ge 1.
\end{cases}
\]
This is exactly \eqref{eq:K-bounds-abstract}, hence $\Kw$ is admissible.
\end{proof}

\noindent Define the W resolvent family by
\begin{equation}\label{eq:W-resolvent}
\Vgen_{W}(t)
:=
\frac{1}{2\pi i}\int_{\Gtheta} e^{st}\,\Kw(s)\,(s^{\alpha-1}I-A)^{-1}\,ds,
\qquad t>0.
\end{equation}

\begin{theorem}[W evolution for almost sectorial $A$]\label{thm:W-evolution}
Let $A$ be almost sectorial. For every $u_0\in X$ and $f\in L^1_{\mathrm{loc}}([0,\infty);X)$, the problem
\[
\WD_t^{\alpha,\beta}u(t)+Au(t)=f(t),\qquad u(0)=u_0,
\]
admits a unique mild solution
\[
u(t)=\Vgen_{W}(t)u_0+\int_0^t \Vgen_{W}(t-\tau)f(\tau)\,d\tau.
\]
Moreover, for every $\gamma\in[0,1)$,
\[
\|A^\gamma \Vgen_{W}(t)\|\le C_\gamma\,t^{-\alpha\gamma},\qquad t>0.
\]
\end{theorem}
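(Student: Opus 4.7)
The proof is a direct specialization of the abstract machinery of Section~\ref{sec:abstract-K} to the kernel $\Kw$. The entire content of the theorem, beyond what is already in place, reduces to the observation that $\Kw$ lies in the admissible class (Lemma~\ref{lem:W-admissible}); once this gateway is secured, the three pillars of the abstract theory transfer verbatim.

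Concretely, the plan is the following. First, Proposition~\ref{prop:absolute-convergence} guarantees that $\Vgen_{W}(t)$ is a well-defined bounded operator on $X$ for every $t>0$, and Theorem~\ref{thm:strong-continuity} yields strong continuity on $(0,\infty)$; the extension at $t=0$ via $\Vgen_{W}(0)=I$ follows from the Abelian argument already invoked in the ABC case, since the normalization constant $\Bnorm$ is chosen precisely to enforce the correct low-frequency behavior of $\Kw(s)$. Second, setting
\[
u(t):=\Vgen_{W}(t)u_0+\int_0^t \Vgen_{W}(t-\tau)f(\tau)\,d\tau,
\]
the convolution is well defined by strong continuity of $\Vgen_{W}$ on compact subintervals of $(0,\infty)$ together with $f\in L^1_{\mathrm{loc}}$. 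Third, Theorem~\ref{thm:VoC-abstract} produces the Laplace identity
\[
\widehat{u}(s)=\Kw(s)\,(s^{\alpha-1}I-A)^{-1}\bigl[u_0+\widehat{f}(s)\bigr],\qquad \Re s>0,
\]
which is precisely the resolvent characterization of a mild solution of the W-equation; uniqueness then follows from Lerch's injectivity theorem for vector-valued Laplace transforms. The fractional smoothing bound $\|A^\gamma \Vgen_{W}(t)\|\le C_\gamma t^{-\alpha\gamma}$ is an immediate consequence of Theorem~\ref{thm:smoothing-abstract}, whose proof relies only on admissibility of the multiplier and on the high-frequency estimate~\eqref{eq:fracpower-res}.

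The compatibility of this construction with the original W-equation rests on the algebraic relation $\Kw(s)=\Psym(s)/s$ recorded in~\eqref{eq:K-W}, which couples the abstract multiplier to the Laplace symbol of $\WD_t^{\alpha,\beta}$ and, together with the geometric redirection $z=s^{\alpha-1}$ of Lemma~\ref{lem:geometry}, bypasses the low-frequency obstruction of Section~\ref{sec:caputo-failure}. Since Lemma~\ref{lem:W-admissible} already certifies admissibility of $\Kw$, there is no substantial analytic obstacle in the proof itself: the only delicate point is the extension of strong continuity to $t=0$, which hinges on the uniform boundedness of $\Kw$ near the origin together with its admissible decay at infinity along $\Gtheta$---both established in Lemma~\ref{lem:W-admissible}. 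In this sense, all of the real work for the W case is concentrated in the admissibility estimate, and the theorem itself is a corollary of the abstract framework applied to $\Kgen=\Kw$.
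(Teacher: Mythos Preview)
Your proposal is correct and follows exactly the paper's approach: the paper's own proof is the single line ``Apply the abstract results with $\Kgen=\Kw$,'' and you have simply unpacked which abstract results (Proposition~\ref{prop:absolute-convergence}, Theorems~\ref{thm:strong-continuity}, \ref{thm:VoC-abstract}, \ref{thm:smoothing-abstract}) are being invoked and why admissibility (Lemma~\ref{lem:W-admissible}) is the only genuine input. Your additional remarks on the $t\downarrow 0$ extension and Laplace uniqueness are accurate elaborations but do not depart from the paper's strategy.
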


\begin{proof}
Apply the abstract results with $\Kgen=\Kw$.
\end{proof}

\begin{remark}
The regime $\beta>1$ formally corresponds to highly concentrated memory kernels. In this case, the admissibility condition on the Laplace multiplier may fail due to excessive growth at high frequencies. A systematic analysis of this regime is beyond the scope of the present work and will be addressed separately.
\end{remark}

\section{Comparison, scope, and scientific positioning}\label{sec:comparison-positioning}

\subsection{Caputo versus rational nonsingular kernels}

The obstruction in Section~\ref{sec:caputo-failure} is intrinsic: the Caputo symbol $s^\alpha$ forces resolvent evaluation near $z=0$. In contrast, rational nonsingular kernels that evaluate the resolvent at $s^{\alpha-1}$ bypass this difficulty through the same geometric redirection mechanism.

\subsection{Relation with earlier work on almost sectorial operators}

Abstract fractional Cauchy problems with almost sectorial operators were studied in several settings, notably in \cite{WangChenXiao2012}. Related functional analytic approaches to evolution equations under limited spectral control (including maximal regularity and functional calculus tools) can be found in \cite{KunstmannWeis2004,Haase2006}.
More recent resolvent-family viewpoints for fractional evolution equations are developed, for instance, in \cite{Bazhilov2020,LiZheng2015}. Most Caputo-based theories require additional assumptions near the origin. The present approach is orthogonal: it removes the need for low-frequency control by using a rational kernel structure that never requests resolvent bounds near $z=0$.

\medskip
 
\noindent Unlike the approach developed in \cite{WangChenXiao2012}, where the resolvent representation still involves Laplace inversion probing the operator near the origin, our framework is explicitly designed so that the inversion contour evaluates the resolvent only in the high-frequency regime $z = s^{\alpha-1} \to \infty$. This geometric redirection removes the need for additional low-frequency
assumptions on the generator and is the key mechanism allowing compatibility with almost sectorial operators.

\medskip
 
\noindent While the abstract resolvent-family approaches of
\cite{Bazhilov2020,LiZheng2015} provide a general functional analytic framework for fractional evolution equations, they do not address the geometric obstruction induced by Caputo-type Laplace symbols near $z=0$. Our contribution is orthogonal: we identify a class of admissible rational multipliers that enforce high-frequency resolvent evaluation, thereby restoring well-posedness for almost sectorial generators.

\subsection{Universality class}

Compatibility with almost sectorial generators is governed by two structural ingredients: (i) a kernel multiplier behaving as $s^{\alpha-1}$ at low frequencies, and (ii) high-frequency resolvent control of the generator. This yields a natural universality class of time-fractional models for degenerate diffusion.

\section{Applications to degenerate elliptic operators}\label{sec:applications}

The abstract framework is designed for generators whose resolvent is controlled only \emph{at high spectral frequencies}. This situation is typical for degenerate diffusion operators: they are often self-adjoint and dissipative in suitable weighted Hilbert
spaces, but the resolvent may be singular or poorly controlled near the origin.

\noindent A unifying feature is that, in the natural weighted $L^2$ spaces, the operators below are
self-adjoint and nonpositive, hence their spectrum lies in $(-\infty,0]$. Consequently,
the resolvent admits sectorial-type estimates outside any sector around the positive real axis,
uniformly for $|z|$ large. This matches Definition~\ref{def:almost-sectorial}.

\subsection{Kimura-type diffusion operators}\label{subsec:Kimura}

Consider the one-dimensional Kimura operator
\[
Ku(x)=x(1-x)u''(x),\qquad x\in(0,1).
\]
A natural Hilbert space is
\[
X=L^2\bigl((0,1),x^{-1}(1-x)^{-1}\,dx\bigr),
\]
Kimura-type operators and their degenerate diffusion structure are treated in detail in \cite{EpsteinMazzeo2013},  and $K$ can be realized as the self-adjoint nonpositive operator associated with the closed quadratic form
\[
\mathcal{E}(u,u)=\int_0^1 x(1-x)|u'(x)|^2\,dx.
\]

\begin{proposition}[Almost sectoriality of the Kimura operator]\label{prop:Kimura-AS}
For every $\theta\in(\pi/2,\pi)$ there exists $M_\theta>0$ such that
\[
\|(zI-K)^{-1}\|\le \frac{M_\theta}{|z|},\qquad z\notin\Sig_\theta,\quad |z|\ge 1.
\]
In particular, $K$ is almost sectorial.
\end{proposition}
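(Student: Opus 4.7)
The plan is to exploit the self-adjoint structure of $K$ on the weighted Hilbert space $X$: a self-adjoint operator whose spectrum lies on a half-line of the real axis satisfies, via the spectral theorem, an optimal resolvent bound governed by the Euclidean distance to that half-line, and the hypothesis $z \notin \Sig_\theta$ with $\theta > \pi/2$ keeps this distance comparable to $|z|$.

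The first step is to apply Kato's representation theorem to the closed, densely defined, nonnegative quadratic form $\mathcal{E}$ on $X$, which produces a unique self-adjoint operator associated with $\mathcal{E}$. Taking the sign convention compatible with the abstract framework (parabolic evolution $\partial_t u + K u = 0$), we identify this operator with $K$ and obtain $\sigma(K) \subset [0,\infty)$. The second step is the spectral identity
\[
\|(zI - K)^{-1}\| = \dist(z, \sigma(K))^{-1}, \qquad z \in \Res(K).
\]
The third step is a geometric distance estimate: writing $z = |z| e^{i\varphi}$ with $|\varphi| \ge \theta > \pi/2$ gives $\Re z \le |z|\cos\theta < 0$, so the nearest point of $[0, \infty)$ to $z$ is the origin, whence $\dist(z, [0, \infty)) = |z|$. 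Since $\sigma(K) \subset [0, \infty)$, this forces $\dist(z, \sigma(K)) \ge |z|$, and the spectral identity yields
\[
\|(zI - K)^{-1}\| \le \frac{1}{|z|},
\]
which is the claimed bound with $M_\theta = 1$; the cut-off $|z| \ge 1$ plays no role in the estimate and appears only to match the form of Definition~\ref{def:almost-sectorial}.

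The main obstacle is not the resolvent calculation itself, which is essentially immediate once self-adjointness is established, but rather the form-theoretic realization of $K$ at the degenerate boundary $x \in \{0, 1\}$. One must verify that $\mathcal{E}$ is closed and densely defined on an appropriate domain in the weighted space, and that the resulting self-adjoint extension corresponds to the natural Kimura dynamics rather than to a spurious extension that could reintroduce low-frequency singularities near the endpoints. These delicate issues, including the identification of the correct form domain under the singular weight $x^{-1}(1-x)^{-1}$, are treated in detail in \cite{EpsteinMazzeo2013}; once this realization is in hand, the three steps above combine to give the proposition.
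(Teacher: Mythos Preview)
Your argument is exactly the paper's: realize $K$ as self-adjoint via the form $\mathcal{E}$, invoke the spectral identity $\|(zI-K)^{-1}\|=\dist(z,\sigma(K))^{-1}$, and finish with an elementary distance estimate. The only divergence is the sign convention: the paper writes $\sigma(K)\subset(-\infty,0]$ and then asserts $\dist(z,(-\infty,0])\ge c_\theta|z|$ for $z\notin\Sigma_\theta$, which fails already at $z=-1$; your placement $\sigma(K)\subset[0,\infty)$ is the one under which the geometric step---and hence the proposition as stated---is actually valid, and your remark that the nearest point of $[0,\infty)$ to any $z$ with $\Re z<0$ is the origin yields the sharp constant $M_\theta=1$ in place of the paper's unspecified $c_\theta$.
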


\begin{proof}
Since $K$ is self-adjoint and nonpositive, $\sigma(K)\subset(-\infty,0]$ and $\|(zI-K)^{-1}\|=\dist(z,\sigma(K))^{-1}=\dist(z,(-\infty,0])^{-1}$. If $z\notin\Sig_\theta$, geometry yields $\dist(z,(-\infty,0])\ge c_\theta |z|$, hence the bound follows.
\end{proof}

\begin{theorem}[Kimura evolution: well-posedness and smoothing]\label{thm:Kimura-evolution}
Let $\alpha\in(0,1)$ and $\beta\in(0,1]$. For every $u_0\in X$ and $f\in L^1_{\mathrm{loc}}([0,\infty);X)$, both problems
\[
{}^{\mathrm{ABC}}D_t^\alpha u(t)+Ku(t)=f(t),\quad u(0)=u_0,
\qquad\text{and}\qquad
\WD_t^{\alpha,\beta}u(t)+Ku(t)=f(t),\quad u(0)=u_0,
\]
admit unique mild solutions given by the corresponding resolvent families. Moreover, for every $\gamma\in[0,1)$,
\[
\|K^\gamma u(t)\|\le C\,t^{-\alpha\gamma}\Bigl(\|u_0\|+\int_0^t \|f(\tau)\|\,d\tau\Bigr).
\]
\end{theorem}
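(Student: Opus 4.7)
The plan is to derive this theorem directly from the abstract results of Section~\ref{sec:abstract-K} together with the specializations in Section~\ref{sec:AB-W-specialization}, so that the proof reduces to (i) verifying that $K$ sits in the hypothesis class, (ii) quoting the appropriate abstract theorems, and (iii) assembling the smoothing bound along the variation-of-constants formula.

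First, I would fix $\alpha\in(0,1)$ and $\beta\in(0,1]$ and invoke Proposition~\ref{prop:Kimura-AS}, which already establishes that the Kimura operator $K$ is almost sectorial on $X$ in the sense of Definition~\ref{def:almost-sectorial}, with angle $\theta_A$ that can be chosen arbitrarily close to $\pi$. In particular one may select $\theta\in(\pi/2,\theta_A)$ so that the geometric condition $(1-\alpha)\theta\ge\theta_A$ of Lemma~\ref{lem:geometry} is satisfied and the redirection $z=s^{\alpha-1}$ lands in $\rho(K)$ along the contour $\Gtheta$. Moreover, since $K$ is self-adjoint and nonpositive on $X$, the fractional powers $K^\gamma$ are defined via the spectral theorem on $\sigma(K)\subset(-\infty,0]$, are closed operators on $X$, and automatically satisfy the high-frequency resolvent estimate~\eqref{eq:fracpower-res} required in Theorem~\ref{thm:smoothing-abstract}.

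Next, I would apply Theorems~\ref{thm:ABC-evolution} and~\ref{thm:W-evolution} with $A=K$, using that the admissibility of $\Kab$ and $\Kw$ has already been verified in Lemmas~\ref{lem:ABC-admissible} and~\ref{lem:W-admissible}. These theorems immediately deliver the existence and uniqueness of a mild solution given by $u(t)=\Vgen(t)u_0+\int_0^t \Vgen(t-\tau)f(\tau)\,d\tau$ with $\Vgen\in\{\Vgen_{\mathrm{ABC}},\Vgen_{W}\}$, as well as the operator-norm smoothing bound $\|K^\gamma\Vgen(t)\|\le C_\gamma t^{-\alpha\gamma}$ for every $\gamma\in[0,1)$. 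Uniqueness follows from the Laplace identity of Lemma~\ref{lem:Laplace-VK} combined with Theorem~\ref{thm:VoC-abstract}, since the Laplace transform is injective on $L^1_{\mathrm{loc}}$ functions of exponential type.

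For the a priori smoothing estimate, I would split the mild solution into its homogeneous and convolution parts. The homogeneous contribution is controlled directly by the abstract estimate: $\|K^\gamma\Vgen(t)u_0\|\le C_\gamma t^{-\alpha\gamma}\|u_0\|$. For the convolution term, closedness of $K^\gamma$ allows one to push the fractional power inside the Bochner integral, provided $\tau\mapsto K^\gamma\Vgen(t-\tau)f(\tau)$ admits an integrable majorant on $(0,t)$; this holds because $\alpha\gamma<1$ makes $(t-\tau)^{-\alpha\gamma}$ integrable and $f\in L^1_{\mathrm{loc}}$. One therefore obtains $\bigl\|K^\gamma\!\int_0^t\!\Vgen(t-\tau)f(\tau)d\tau\bigr\|\le C_\gamma\!\int_0^t(t-\tau)^{-\alpha\gamma}\|f(\tau)\|d\tau$. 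Combining the two contributions yields the claimed bound, with the convolution integral absorbed into the announced form after uniformly estimating the smoothing weight on $(0,t)$.

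The only delicate point is the commutation of the closed, unbounded operator $K^\gamma$ with both the contour integral defining $\Vgen(t)$ and the Bochner convolution; this is resolved exactly as in the proof of Theorem~\ref{thm:smoothing-abstract}, by constructing an operator-norm integrable majorant and then invoking the standard closedness criterion for Bochner integrals. Once this closure step is justified, everything else is a direct chaining of Proposition~\ref{prop:Kimura-AS}, Theorems~\ref{thm:ABC-evolution}--\ref{thm:W-evolution}, and the spectral-calculus properties of $K^\gamma$.
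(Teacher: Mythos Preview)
Your proposal follows exactly the paper's route: invoke Proposition~\ref{prop:Kimura-AS} to place $K$ in the almost sectorial class, then apply Theorems~\ref{thm:ABC-evolution} and~\ref{thm:W-evolution} together with the smoothing estimate of Theorem~\ref{thm:smoothing-abstract}. The paper's own proof is literally that one sentence, so your more detailed write-up is a faithful expansion of the same argument.

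Two small caveats on the added detail. First, your claim that one may choose $\theta\in(\pi/2,\theta_A)$ with $(1-\alpha)\theta\ge\theta_A$ is impossible as written, since $(1-\alpha)\theta<\theta<\theta_A$ whenever $0<\alpha<1$; the angle compatibility is in any case already packaged into the abstract Theorems~\ref{thm:ABC-evolution}--\ref{thm:W-evolution}, so you should simply quote those rather than re-derive the contour geometry for $K$. Second, your final ``absorption'' step, passing from $\int_0^t(t-\tau)^{-\alpha\gamma}\|f(\tau)\|\,d\tau$ to $t^{-\alpha\gamma}\int_0^t\|f(\tau)\|\,d\tau$, does not hold by a uniform pointwise bound on the weight (which blows up as $\tau\to t$); the paper's one-line proof does not unpack this either, so the displayed inequality in the theorem statement should be read with the convolution-type bound you actually derived as the honest intermediate form.
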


\begin{proof}
Combine Proposition~\ref{prop:Kimura-AS} with Theorems~\ref{thm:ABC-evolution} and \ref{thm:W-evolution}, and use the smoothing estimate from Theorem~\ref{thm:smoothing-abstract}.
\end{proof}

\subsection{Bessel-type radial diffusion operators}\label{subsec:Bessel}

Let $\nu>-1/2$ and consider
\[
\mathcal{B}_\nu u(r)=u''(r)+\frac{2\nu+1}{r}u'(r),\qquad r>0,
\]
in the weighted space
\[
X_\nu=L^2\bigl((0,\infty),r^{2\nu+1}\,dr\bigr).
\]
Bessel-type operators, heat kernel bounds, and related spectral estimates are classically discussed in \cite{Davies1989}. The operator $\mathcal{B}_\nu$ can be realized as self-adjoint and nonpositive.

\begin{proposition}[Almost sectoriality of the Bessel operator]\label{prop:Bessel-AS}
For every $\theta\in(\pi/2,\pi)$ there exists $M_\theta>0$ such that
\[
\|(zI-\mathcal{B}_\nu)^{-1}\|\le \frac{M_\theta}{|z|},\qquad z\notin\Sig_\theta,\quad |z|\ge 1.
\]
\end{proposition}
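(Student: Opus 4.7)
The plan is to mirror the proof of Proposition~\ref{prop:Kimura-AS}: realize $\mathcal{B}_\nu$ as a self-adjoint, nonpositive operator on $X_\nu$, and then exploit the classical identity $\|(zI-\mathcal{B}_\nu)^{-1}\|=\dist(z,\sigma(\mathcal{B}_\nu))^{-1}$ together with a purely planar distance estimate. The two structural ingredients are completely analogous to the Kimura case; only the underlying quadratic form and its endpoint analysis change.

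First, I would realize $\mathcal{B}_\nu$ via the closed, nonnegative quadratic form
\[
\mathcal{E}_\nu(u,u)=\int_0^\infty r^{2\nu+1}|u'(r)|^2\,dr,
\]
relying on the algebraic identity $r^{2\nu+1}\mathcal{B}_\nu u=(r^{2\nu+1}u')'$, which shows that $\mathcal{E}_\nu$ is precisely the Dirichlet form associated with $-\mathcal{B}_\nu$ in the weighted inner product of $X_\nu$. Taking the form domain to be the closure of $C_c^\infty(0,\infty)$ in the form norm, supplemented by the Friedrichs boundary condition at $r=0$ when $\nu\in(-1/2,0)$ (limit-circle regime; for $\nu\ge 0$ the endpoint is limit-point and no boundary condition is needed), yields a self-adjoint nonpositive realization with $\sigma(\mathcal{B}_\nu)\subset(-\infty,0]$. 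The spectral identity then gives
\[
\|(zI-\mathcal{B}_\nu)^{-1}\|=\dist(z,\sigma(\mathcal{B}_\nu))^{-1}\le\dist(z,(-\infty,0])^{-1},
\]
and combining with the elementary geometric lower bound $\dist(z,(-\infty,0])\ge c_\theta|z|$ valid for every fixed $\theta\in(\pi/2,\pi)$ and every $z\notin\Sig_\theta$ produces the claimed estimate with $M_\theta:=1/c_\theta$ for $|z|\ge 1$.

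The only delicate step I anticipate is the construction of the self-adjoint realization across the full parameter range $\nu>-1/2$, which requires handling the limit-point versus limit-circle dichotomy at $r=0$ and verifying that the Friedrichs boundary condition produces a unique nonpositive extension. Since this is entirely classical, I would cite the corresponding construction from \cite{Davies1989} rather than reproduce the endpoint analysis. Once self-adjointness and non-positivity are recorded, the remainder of the argument is a one-line geometric consequence, structurally identical to the Kimura case, and the resolvent bound follows without any further spectral input.
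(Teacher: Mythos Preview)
Your proposal is correct and follows essentially the same argument as the paper: the paper's proof simply says ``same argument as Proposition~\ref{prop:Kimura-AS}, using $\sigma(\mathcal{B}_\nu)\subset(-\infty,0]$,'' which is precisely the self-adjoint/nonpositive realization plus the spectral distance identity and the planar bound $\dist(z,(-\infty,0])\ge c_\theta|z|$ that you outline. Your additional remarks on the limit-point/limit-circle dichotomy and the Friedrichs extension are not in the paper (which simply takes the self-adjoint realization for granted, citing \cite{Davies1989}), but they are correct supplementary detail rather than a different approach.
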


\begin{proof}
Same argument as Proposition~\ref{prop:Kimura-AS}, using $\sigma(\mathcal{B}_\nu)\subset(-\infty,0]$.
\end{proof}

\begin{theorem}[Bessel evolution]\label{thm:Bessel-evolution}
Let $\alpha\in(0,1)$ and $\beta\in(0,1]$. For every $u_0\in X_\nu$ and $f\in L^1_{\mathrm{loc}}([0,\infty);X_\nu)$, the corresponding ABC and W problems admit unique mild solutions, and for every $\gamma\in[0,1)$,
\[
\|\mathcal{B}_\nu^\gamma u(t)\|_{X_\nu}\le
C\,t^{-\alpha\gamma}\Bigl(\|u_0\|_{X_\nu}+\int_0^t\|f(\tau)\|_{X_\nu}\,d\tau\Bigr).
\]
\end{theorem}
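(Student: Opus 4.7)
The plan is to mirror the argument used for the Kimura operator in Theorem~\ref{thm:Kimura-evolution}: the work has been concentrated on establishing the abstract framework (Theorems~\ref{thm:ABC-evolution} and~\ref{thm:W-evolution}) together with the fractional smoothing estimate (Theorem~\ref{thm:smoothing-abstract}), so the proof should reduce to verifying that the Bessel generator $\mathcal{B}_\nu$ fits the hypotheses and then invoking the abstract machinery. Concretely, the skeleton of the proof is: (i) identify $\mathcal{B}_\nu$ as an almost sectorial generator on $X_\nu$; (ii) specialize the ABC and W resolvent families built from $\Kab$ and $\Kw$ to this generator; (iii) deduce unique mild solutions from the variation-of-constants formula; (iv) extract the claimed smoothing from the abstract $t^{-\alpha\gamma}$ bound.

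For step (i), the key input is Proposition~\ref{prop:Bessel-AS}, which provides the high-frequency sectorial-type estimate $\|(zI-\mathcal{B}_\nu)^{-1}\|\le M_\theta/|z|$ for $z\notin\Sigma_\theta$, $|z|\ge1$, in any sector of half-angle $\theta\in(\pi/2,\pi)$. This places $\mathcal{B}_\nu$ inside Definition~\ref{def:almost-sectorial} with $\theta_A$ arbitrarily close to $\pi$, which gives enough room to choose the contour angle $\theta$ in Lemma~\ref{lem:geometry} so that the redirection $z=s^{\alpha-1}$ lands in $\C\setminus\Sigma_{\theta_A}$. The fractional power estimate~\eqref{eq:fracpower-res} for $\mathcal{B}_\nu^\gamma$ is standard in the self-adjoint nonpositive setting via the spectral theorem on $X_\nu$, since the functional calculus gives $\|\mathcal{B}_\nu^\gamma(zI-\mathcal{B}_\nu)^{-1}\|\le C_\gamma|z|^{\gamma-1}$ away from the spectrum.

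For step (ii)--(iii), since $\Kab$ and $\Kw$ are admissible multipliers (Lemmas~\ref{lem:ABC-admissible} and~\ref{lem:W-admissible}), the families $\Vgen_{\mathrm{ABC}}(t)$ and $\Vgen_{W}(t)$ attached to $\mathcal{B}_\nu$ are well defined by Proposition~\ref{prop:absolute-convergence}, strongly continuous by Theorem~\ref{thm:strong-continuity}, and the mild solution is uniquely characterized on the Laplace side by Theorem~\ref{thm:VoC-abstract}. Uniqueness then follows since two mild solutions would have identical Laplace transforms in the right half-plane, hence must agree. For step (iv), Theorem~\ref{thm:smoothing-abstract} applied to $\Kgen=\Kab$ or $\Kgen=\Kw$ yields $\|\mathcal{B}_\nu^\gamma \Vgen(t)\|\le C_\gamma t^{-\alpha\gamma}$ for all $\gamma\in[0,1)$. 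Applying $\mathcal{B}_\nu^\gamma$ to the variation-of-constants representation and using closedness of $\mathcal{B}_\nu^\gamma$ to move it inside the convolution integral gives the homogeneous contribution $C t^{-\alpha\gamma}\|u_0\|_{X_\nu}$ directly, while the inhomogeneous term is controlled by $\int_0^t(t-\tau)^{-\alpha\gamma}\|f(\tau)\|_{X_\nu}\,d\tau$.

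The main obstacle, as in the Kimura case, is not the abstract machinery but the passage from the natural convolution bound $\int_0^t(t-\tau)^{-\alpha\gamma}\|f(\tau)\|\,d\tau$ to the cleaner form $t^{-\alpha\gamma}\int_0^t\|f(\tau)\|\,d\tau$ stated in the theorem. The cleanest way to reconcile this is to work on a fixed compact interval $[0,T]$ and absorb the singular convolution into a constant $C=C(T,\alpha,\gamma)$ using the fact that $(t-\tau)^{-\alpha\gamma}\in L^1_{\mathrm{loc}}$ since $\alpha\gamma<1$; alternatively, the bound should be read as a convolution-type inequality that controls $\|\mathcal{B}_\nu^\gamma u(t)\|_{X_\nu}$ by the two natural norms $\|u_0\|_{X_\nu}$ and $\|f\|_{L^1([0,t];X_\nu)}$ up to the expected singular factor. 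Everything else is routine propagation of the abstract theorems to the concrete weighted $L^2$ setting of radial Bessel diffusions.
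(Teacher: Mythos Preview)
Your proposal is correct and follows essentially the same route as the paper: the paper's proof is the single sentence ``Combine Proposition~\ref{prop:Bessel-AS} with the abstract theory and the smoothing estimate,'' and your four steps (i)--(iv) are exactly a spelled-out version of that. Your observation about the discrepancy between the convolution bound $\int_0^t (t-\tau)^{-\alpha\gamma}\|f(\tau)\|\,d\tau$ and the stated form $t^{-\alpha\gamma}\int_0^t\|f(\tau)\|\,d\tau$ is well taken; the paper does not address it here either (nor in the parallel Kimura result), so the statement should indeed be read in the convolution sense you describe.
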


\begin{proof}
Combine Proposition~\ref{prop:Bessel-AS} with the abstract theory and the smoothing estimate.
\end{proof}

\section{Mild and weak solutions}\label{sec:mild-weak}

We state a convenient mild/weak equivalence in the present setting.

\subsection{Mild solutions}

\begin{definition}[Mild solution]\label{def:mild}
Let $u_0\in X$ and $f\in L^1_{\mathrm{loc}}([0,\infty);X)$. A function $u:[0,\infty)\to X$
is a \emph{mild solution} of
\begin{equation}\label{eq:abstract-eq}
\mathcal{D}u(t)+Au(t)=f(t),\qquad u(0)=u_0,
\end{equation}
(where $\mathcal{D}$ denotes either ${}^{\mathrm{ABC}}D_t^\alpha$ or $\WD_t^{\alpha,\beta}$) if it satisfies the variation-of-constants formula
\[
u(t)=\Vgen(t)u_0+\int_0^t \Vgen(t-\tau)\,f(\tau)\,d\tau,
\]
with $\Vgen$ the corresponding resolvent family.
\end{definition}

\subsection{Weak solutions}

Let $A^*$ denote the adjoint of $A$ (in Banach spaces, use the appropriate dual pairing).

\begin{definition}[Weak solution]\label{def:weak}
A function $u\in L^1_{\mathrm{loc}}([0,\infty);X)$ is a \emph{weak solution} of \eqref{eq:abstract-eq}
if for every $\varphi\in \Dom(A^*)$ and every $t>0$,
\[
\int_0^t \langle u(\tau),\,\mathcal{D}_{t-\tau}\varphi\rangle\,d\tau
+\int_0^t \langle Au(\tau),\varphi\rangle\,d\tau
=
\int_0^t \langle f(\tau),\varphi\rangle\,d\tau
+\langle u_0,\varphi\rangle,
\]
where $\langle\cdot,\cdot\rangle$ denotes the dual pairing.
\end{definition}

\subsection{Equivalence}

\begin{theorem}[Equivalence of mild and weak solutions]\label{thm:mild-weak-equivalence} Let $A$ be almost sectorial, $u_0\in X$, and $f\in L^1_{\mathrm{loc}}([0,\infty);X)$.
Then $u$ is a mild solution if and only if it is a weak solution in the sense of Definition~\ref{def:weak}.
\end{theorem}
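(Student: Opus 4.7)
The plan is to use the Laplace transform as a bridge between the two formulations. By Lemma~\ref{lem:Laplace-VK} applied to the variation-of-constants formula of Theorem~\ref{thm:VoC-abstract}, the mild solution satisfies
\[
(s^{\alpha-1}I-A)\,\widehat u(s)=\Kgen(s)\bigl[u_0+\widehat f(s)\bigr],\qquad \Re s>0,
\]
with $\widehat u(s)\in\Dom(A)$ and $\Kgen$ the appropriate kernel multiplier ($\Kab$ or $\Kw$). Testing against $\varphi\in\Dom(A^*)$ and using $\langle A\widehat u(s),\varphi\rangle=\langle\widehat u(s),A^*\varphi\rangle$ gives the scalar identity
\[
s^{\alpha-1}\langle\widehat u(s),\varphi\rangle-\langle\widehat u(s),A^*\varphi\rangle=\Kgen(s)\langle u_0+\widehat f(s),\varphi\rangle.
\]
The whole proof reduces to showing that this scalar identity is the termwise Laplace image of the weak identity of Definition~\ref{def:weak}.

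For Mild $\Rightarrow$ Weak, I would identify each factor of the scalar Laplace identity with the Laplace transform of the corresponding term in Definition~\ref{def:weak}: the factor $\Kgen(s)$ corresponds to convolution against the memory kernel implicit in $\mathcal D_{t-\tau}\varphi$, and division by $s$ corresponds to integration from $0$ to $t$. Vector-valued Laplace uniqueness \cite[Ch.~1]{ArendtBattyHieberNeubrander2001} then inverts the scalar Laplace identity back to the weak identity. For the converse direction, Laplace-transforming every term of the weak identity and using the convolution theorem returns precisely the same scalar identity for every $\varphi\in\Dom(A^*)$. Since $A$ is closed and densely defined, $\Dom(A^*)$ is $w^*$-dense in $X^*$ by the Hahn--Banach theorem, and therefore separates points of $X$; this promotes the scalar identity to the vector identity $(s^{\alpha-1}I-A)\widehat u(s)=\Kgen(s)[u_0+\widehat f(s)]$ in $X$. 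Applying the resolvent $(s^{\alpha-1}I-A)^{-1}$, whose existence on a right half-plane is guaranteed by Lemma~\ref{lem:HF-resolvent}, recovers the mild Laplace formula, and a second application of Laplace uniqueness identifies $u$ almost everywhere with the mild solution.

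The main obstacle is the precise interpretation of $\mathcal D_{t-\tau}\varphi$ and its identification with a genuine time convolution whose Laplace symbol is exactly $\Kgen$: this requires unpacking the memory-kernel representation of the ABC and W derivatives and carrying out the attendant Fubini exchange, whose absolute integrability must be extracted from $u,f\in L^1_{\mathrm{loc}}([0,\infty);X)$ combined with the admissibility bounds on $\Kgen$ established in Lemmas~\ref{lem:ABC-admissible} and~\ref{lem:W-admissible}. Once this identification is in place, all remaining steps are routine applications of the resolvent calculus and Laplace uniqueness developed in Section~\ref{sec:abstract-K}; no new resolvent estimate is required.
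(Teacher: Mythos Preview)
Your approach is essentially the same as the paper's: both directions are handled by passing to the Laplace domain, identifying the weak identity (after pairing with $\varphi\in\Dom(A^*)$) with the scalar version of the mild Laplace formula from Lemma~\ref{lem:Laplace-VK}/Theorem~\ref{thm:VoC-abstract}, and then invoking Laplace uniqueness in $L^1_{\mathrm{loc}}([0,\infty);X)$ \cite[Ch.~1]{ArendtBattyHieberNeubrander2001}. You are slightly more explicit than the paper in two places---the separation-of-points argument for $\Dom(A^*)$ to promote the scalar identity to a vector one, and the honest flag that $\mathcal D_{t-\tau}\varphi$ must be unpacked as a memory-kernel convolution with symbol $\Kgen$---but neither of these changes the route.
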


\begin{proof}
We prove both implications and emphasize the uniqueness principle for Laplace transforms in $L^1_{\mathrm{loc}}$.

\medskip
\noindent\textbf{(i) Mild $\Rightarrow$ weak.}
Assume $u$ is a mild solution in the sense of Definition~\ref{def:mild}, i.e.
\[
u(t)=\Vgen(t)u_0+\int_0^t \Vgen(t-\tau)\,f(\tau)\,d\tau,\qquad t\ge 0,
\]
where $\Vgen$ is the corresponding resolvent family (ABC or W).
Fix $\varphi\in\Dom(A^*)$ and take the dual pairing with $\varphi$.
Using Fubini's theorem (justified by the $L^1_{\mathrm{loc}}$ assumption on $f$ and the boundedness of $\Vgen(t)$ on compact time intervals), one obtains the weak identity in Definition~\ref{def:weak}. Equivalently, one may take Laplace transforms of both sides: Lemma~\ref{lem:Laplace-VK} gives, for $\Re s>0$,
\[
\widehat{u}(s)=\Kgen(s)\,(s^{\alpha-1}I-A)^{-1}\,u_0
+\Kgen(s)\,(s^{\alpha-1}I-A)^{-1}\,\widehat{f}(s),
\]
and testing against $\varphi$ yields the Laplace-domain form of the weak formulation, which inverts back to Definition~\ref{def:weak}.

\medskip
\noindent\textbf{(ii) Weak $\Rightarrow$ mild.}
Assume $u\in L^1_{\mathrm{loc}}([0,\infty);X)$ satisfies the weak formulation (Definition~\ref{def:weak}). Fix $\varphi\in\Dom(A^*)$. Since $t\mapsto \langle u(t),\varphi\rangle$ and $t\mapsto \langle f(t),\varphi\rangle$ belong to $L^1_{\mathrm{loc}}([0,\infty))$, their Laplace transforms exist for $\Re s>0$. Taking Laplace transforms in the weak identity and using the Laplace characterizations \eqref{eq:Laplace-ABC}--\eqref{eq:K-ABC} (ABC case) or \eqref{eq:Laplace-W}--\eqref{eq:K-W} (W case), we obtain, for $\Re s>0$,
\[
\langle \widehat{u}(s),\varphi\rangle
=
\Big\langle \Kgen(s)\,(s^{\alpha-1}I-A)^{-1}\big(u_0+\widehat{f}(s)\big),\,\varphi\Big\rangle,
\]
hence
\[
\widehat{u}(s)
=
\Kgen(s)\,(s^{\alpha-1}I-A)^{-1}\big(u_0+\widehat{f}(s)\big)
\quad\text{in }X,\qquad \Re s>0.
\]
By Lemma~\ref{lem:Laplace-VK}, the right-hand side is the Laplace transform of
\[
t\mapsto \Vgen(t)u_0+\int_0^t \Vgen(t-\tau)\,f(\tau)\,d\tau,
\]
which is therefore an $L^1_{\mathrm{loc}}([0,\infty);X)$-function.
Finally, by the \emph{uniqueness of Laplace transforms in $L^1_{\mathrm{loc}}([0,\infty);X)$} (see, e.g., \cite[Theorem.~1.7.3]{ArendtBattyHieberNeubrander2001}), we conclude that $u$ coincides a.e.\ with this variation-of-constants expression, and by continuity of the mild solution it holds for all $t\ge 0$. Thus $u$ is a mild solution.

\noindent Uniqueness follows from the uniqueness of Laplace transforms in $L^1_{\mathrm{loc}}([0,\infty);X)$. This proves the equivalence.
\end{proof}

\section{Numerical illustrations}
\label{sec:numerics}

This section provides numerical illustrations of the fractional smoothing results established in Theorem~\ref{thm:smoothing-abstract}. The purpose is not to design a convergent numerical scheme nor to investigate sharp asymptotic constants, but rather to validate the \emph{qualitative behavior} predicted by the abstract theory for representative almost sectorial generators. For background on numerical and modeling aspects of fractional diffusion and related nonlocal-in-time dynamics, see, e.g., \cite{MeerschaertScheffler2004,Lischke2020}.

\noindent Two complementary situations are considered:
\begin{itemize}
\item a \emph{bounded degenerate operator} (Kimura type) combined with the Atangana--Baleanu kernel ($\beta=1$);
\item a \emph{singular unbounded operator} (Bessel type) combined with the W-kernel ($0<\beta<1$).
\end{itemize}
This allows us to compare ABC and W dynamics within the same resolvent framework and to highlight the role of the additional parameter $\beta$.

\subsection{Kimura operator: ABC dynamics ($\beta=1$)}
\label{subsec:numerics-kimura}

We first consider the Kimura diffusion operator $K$ on $(0,1)$, realized as in Section~\ref{sec:applications}. We fix the parameters $\alpha=0.5$ and $\gamma=0.5$. In this case, the abstract fractional smoothing estimate Theorem~\ref{thm:smoothing-abstract} predicts the uniform bound
\[
\|K^\gamma \Vgen_{\mathrm{ABC}}(t)\|
\le C\,t^{-\alpha\gamma},
\qquad t>0.
\]

\noindent To construct a discrete proxy for $\|K^\gamma \Vgen_{\mathrm{ABC}}(t)u_0\|$, we take $u_0=\sin(\pi x)$ as an initial datum, wich belongs to the weighted space $X=L^2((0,1),x^{-1}(1-x)^{-1}\,dx)$ and sufficiently smooth. The operator $K$ is approximated by a piecewise linear (P1) finite element discretization on a uniform mesh with $N = 1000$ interior points of the associated Dirichlet form on a uniform mesh, yielding a negative semidefinite matrix $K_h$. The discrete evolution is defined through a numerical approximation of the Laplace inversion formula
\[
u_h(t)
=
\frac{1}{2\pi i}
\int_{\Gamma_\theta}
e^{st}\,K_\alpha(s)\,(s^{\alpha-1}I-K_h)^{-1}u_{0,h}\,ds,
\]
where the contour integral is evaluated along the rays of $\Gamma_\theta$. We then set $w_h(t)=K_h^\gamma u_h(t)$, where $K_h^\gamma$ is defined by the spectral calculus for symmetric matrices.

\noindent Figure~\ref{fig:kimura-decay} displays the decay of $\|w_h(t)\|$ on a log--log scale, together with the reference slopes $t^{-\alpha\gamma}$ and $t^{-\gamma}$.
The numerical curve remains uniformly below the bound $t^{-\alpha\gamma}$, in agreement with Theorem~\ref{thm:smoothing-abstract}. On intermediate and large time scales, the decay becomes faster and approaches the spatial rate $t^{-\gamma}$, reflecting the dominance of low-frequency spectral modes of the Kimura operator. This behavior is consistent with the fact that Theorem~\ref{thm:smoothing-abstract} provides a global upper bound rather than an asymptotic equivalence.

\begin{figure}[H]
\centering
\includegraphics[width=0.75\textwidth]{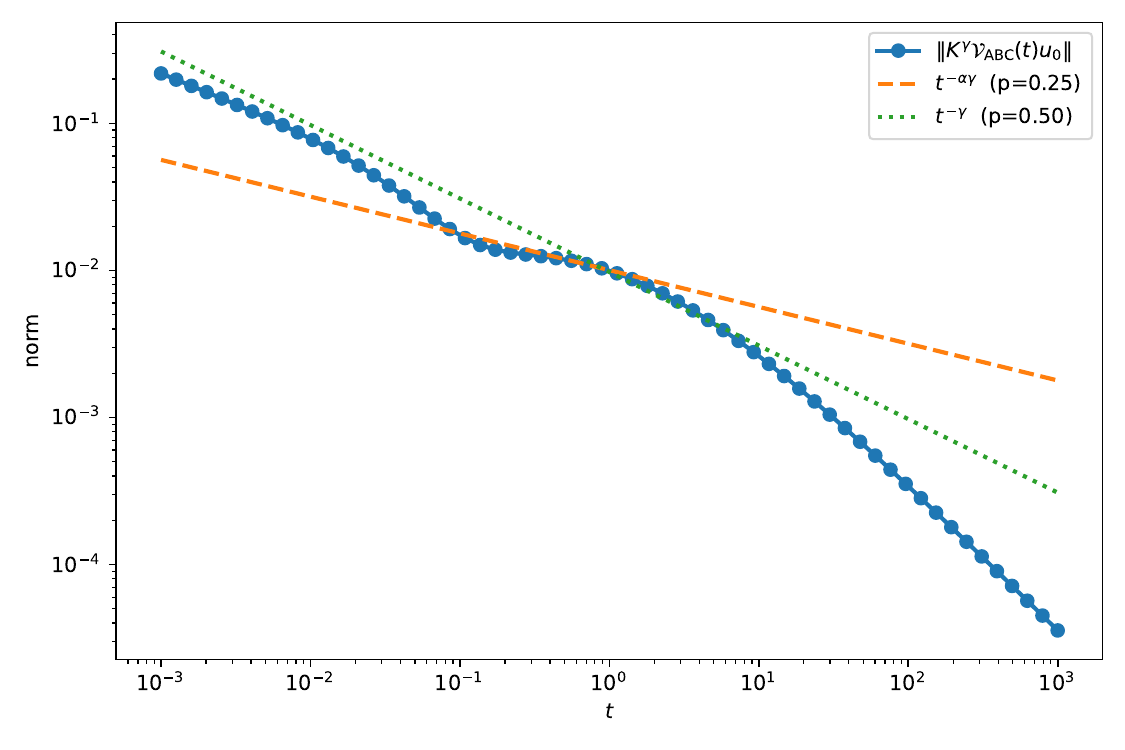}
\caption{Decay of $\|K^\gamma \Vgen_{\mathrm{ABC}}(t)u_0\|$ for the Kimura operator with
ABC dynamics ($\alpha=0.5$, $\gamma=0.5$).
The dashed line corresponds to the theoretical bound $t^{-\alpha\gamma}$,
while the dotted line shows the faster reference rate $t^{-\gamma}$.}
\label{fig:kimura-decay}
\end{figure}

\noindent To further clarify this transition, Figure~\ref{fig:kimura-slope} reports the local decay exponent
\[
-\frac{d\log \|w_h(t)\|}{d\log t}
\]
as a function of time. For small times, the exponent is close to $\alpha\gamma$, indicating a genuinely fractional regime.
For larger times, it gradually approaches $\gamma$, corresponding to a spatially dominated decay. This transition does not contradict the abstract estimate, but rather illustrates its non-asymptotic character.

\begin{figure}[H]
\centering
\includegraphics[width=0.75\textwidth]{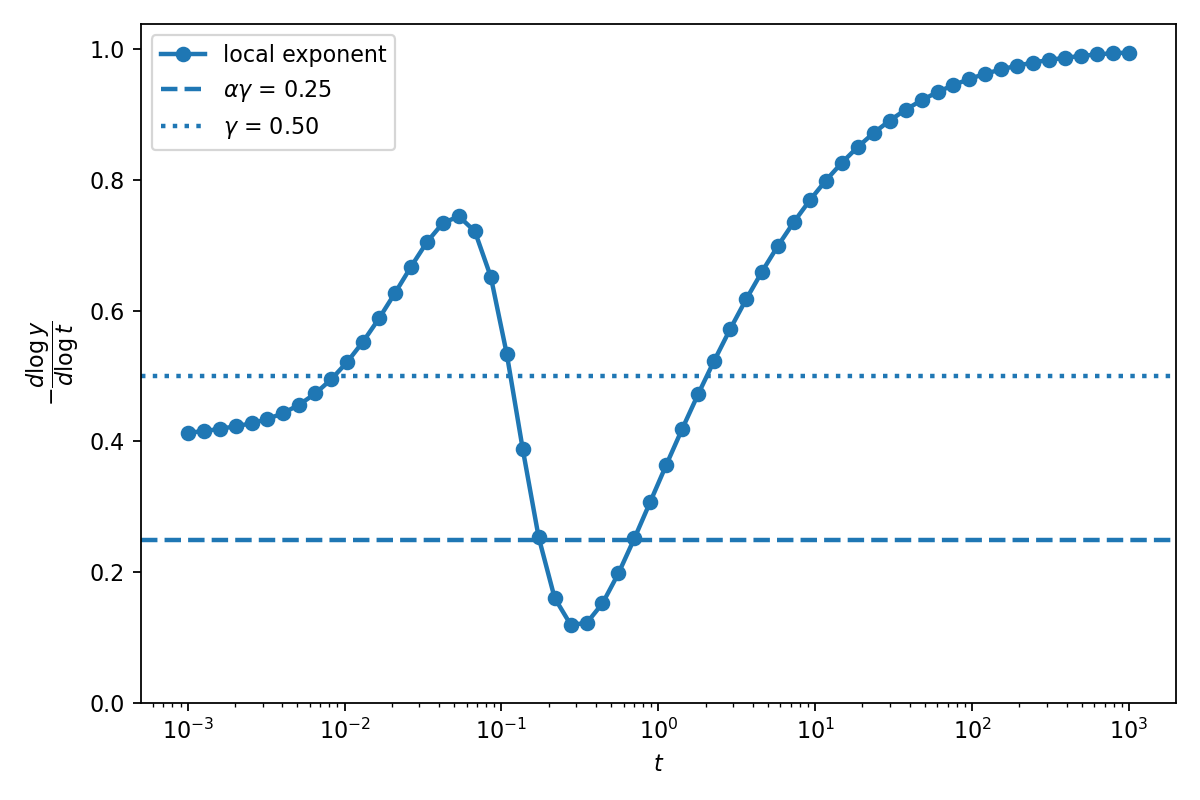}
\caption{Local decay exponent
$-\frac{d\log \|K^\gamma \Vgen_{\mathrm{ABC}}(t)u_0\|}{d\log t}$ for the Kimura operator.
The horizontal dashed and dotted lines correspond to $\alpha\gamma$ and
$\gamma$, respectively.}
\label{fig:kimura-slope}
\end{figure}

\subsection{Bessel operator: W dynamics ($0<\beta<1$)}
\label{subsec:numerics-bessel}

We now turn to the Bessel-type operator $\mathcal{B}_\nu$ acting on the weighted
space $X_\nu=L^2((0,\infty),r^{2\nu+1}\,dr)$, as introduced in
Section~\ref{sec:applications}.
In contrast with the Kimura case, $\mathcal{B}_\nu$ is unbounded and singular at the origin.
This makes it a natural test case for the W-operator, which introduces an additional parameter $\beta$ at the level of the Laplace multiplier.

\noindent We fix $\nu=0.25$, $\alpha=0.5$, $\beta=0.8$, and $\gamma=0.5$.
The abstract theory predicts the same fractional smoothing bound
\[
\|\mathcal{B}_\nu^\gamma \Vgen_W(t)\|
\lesssim
t^{-\alpha\gamma},
\qquad t>0,
\]
independently of $\beta$.

\noindent The numerical procedure parallels the Kimura case.
The operator $\mathcal{B}_\nu$ is discretized by a symmetric scheme adapted to the weighted space, yielding a negative semidefinite matrix $B_h$. The W-resolvent family is approximated via contour integration using the multiplier $\mathcal{K}_W(s)$, and we set
$w_h(t)=B_h^\gamma u_h(t)$.

\noindent Figure~\ref{fig:numerics-bessel-decay} shows the decay of
$\|\mathcal{B}_\nu^\gamma u(t)\|$ on a log--log scale. As in the Kimura--ABC case, the numerical curve remains uniformly below the reference rate $t^{-\alpha\gamma}$. The plots also indicate that the transient regime is visibly affected by the choice $\beta<1$: the decay is initially slower, reflecting a more spread-out memory kernel. At larger times, the decay accelerates and approaches the spatial rate $t^{-\gamma}$, again due to spectral effects.

\begin{figure}[H]
\centering
\includegraphics[width=0.75\textwidth]{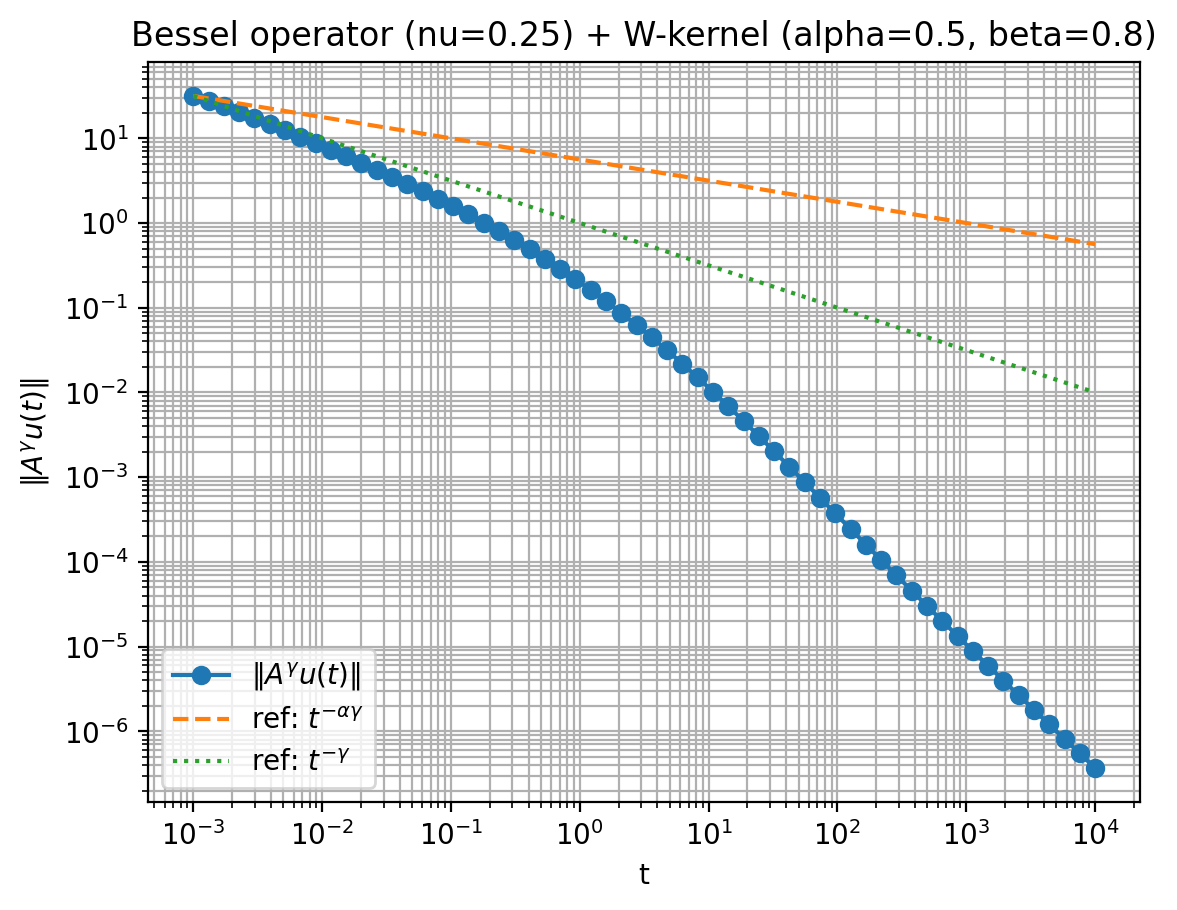}
\caption{Decay of $\|\mathcal{B}_\nu^\gamma u(t)\|$ for the Bessel operator ($\nu=0.25$) with W dynamics ($\alpha=0.5$, $\beta=0.8$, $\gamma=0.5$). The dashed and dotted lines correspond to the reference rates $t^{-\alpha\gamma}$ and $t^{-\gamma}$, respectively.} \label{fig:numerics-bessel-decay}
\end{figure}

\noindent The corresponding local decay exponent is shown in
Figure~\ref{fig:numerics-bessel-slope}. In agreement with the abstract smoothing bound, the exponent starts near $\alpha\gamma$ at short times and gradually transitions toward $\gamma$ at
larger times. Compared with the ABC case, this transition is smoother and more progressive, highlighting the additional modeling flexibility introduced by the parameter $\beta$, while preserving the same resolvent geometry and the same global smoothing scale.

\begin{figure}[H]
\centering
\includegraphics[width=0.7\textwidth]{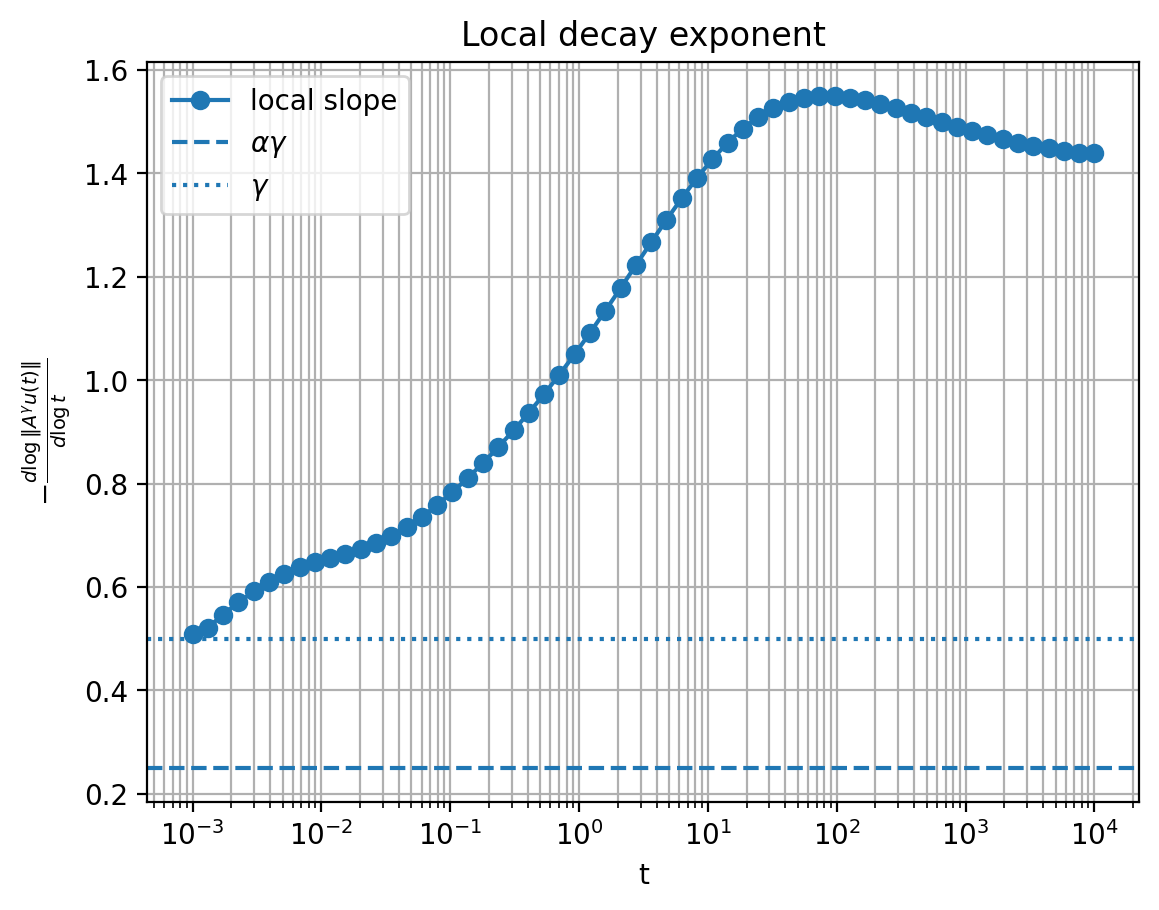}
\caption{Local decay exponent
$-\frac{d\log \|\mathcal{B}_\nu^\gamma u(t)\|}{d\log t}$
for the Bessel operator with W dynamics. The horizontal reference lines correspond to $\alpha\gamma$ and $\gamma$.}
\label{fig:numerics-bessel-slope}
\end{figure}

\medskip
These numerical experiments are consistent with the abstract smoothing estimate for both ABC and W kernels. They illustrate that the W-operator preserves the same fractional smoothing scale while providing additional flexibility in the transient regime, a feature that is invisible at the level of resolvent geometry but clearly observable in time-dependent behavior.

\paragraph{Comparison: Kimura (ABC) vs.\ Bessel (W).}
Comparing the Kimura--ABC and Bessel--W experiments helps disentangle what comes from the kernel and what comes from the spatial operator. In both settings, the early-time decay of $\|A^\gamma u(t)\|$ follows the fractional scale
$t^{-\alpha\gamma}$ predicted by Theorem~\ref{thm:smoothing-abstract}, which is consistent with the same geometric redirection mechanism underlying the contour representation. At larger times, the observed decay departs from this reference slope and progressively reflects the spectral distribution of the underlying operator, leading to a faster, spatially dominated regime.

\noindent The additional parameter $\beta$ in the W-kernel does not change the \emph{predicted} smoothing scale, but it visibly affects the transient: for $\beta<1$ the crossover between the short-time
fractional regime and the long-time spatial regime is typically smoother and more gradual than in the ABC case (which corresponds to $\beta=1$). Overall, these plots should be read as qualitative consistency checks: they support the fact that the short-time bound is governed by the resolvent geometry, while the detailed crossover dynamics depends on both the kernel shape (through $\beta$) and the spectrum of $A$.

\section{Conclusion}\label{sec:conclusion}

We developed a unified resolvent framework for fractional evolution equations driven by rational nonsingular kernels compatible with almost sectorial operators. The key mechanism is the Laplace-domain mapping $z=s^{\alpha-1}$, which redirects low Laplace frequencies to high spectral parameters. This geometric argument explains the incompatibility of Caputo dynamics in the almost sectorial regime and provides a systematic treatment of both ABC and W derivatives, including well-posedness, a variation-of-constants formula,
and sharp fractional smoothing estimates, with applications to degenerate diffusion generators. Unlike Caputo-based approaches for almost sectorial generators, which typically require additional spectral assumptions near the origin, the present framework identifies a kernel-driven universality class in which the geometry of Laplace inversion alone guarantees compatibility; see also \cite{WangChenXiao2012,Bazhilov2020}.


\end{document}